\documentclass[reqno]{amsart}
\usepackage[utf8]{inputenc}
\usepackage{amssymb,amsthm,amsfonts}
\usepackage{amsmath}
\usepackage{tikz}
\usepackage{graphicx}
\usepackage{psfrag}
\usepackage{cancel}
\usepackage{hyperref}
\usepackage{mathrsfs}
\usepackage{microtype}
\newtheorem{definition}{Definition}[section]
\newtheorem{theorem}{Theorem}[section]
\newtheorem{lemma}[theorem]{Lemma}
\newtheorem{corollary}[theorem]{Corollary}

\newtheorem{remark}{Remark}[section]

\numberwithin{equation}{section}

\theoremstyle{plain}

\newcommand{\nt}{\ensuremath{\mathbb{N}}}
\newcommand{\R}{\ensuremath{\mathbb{R}}}
\newcommand{\C}{\ensuremath{\mathbb{C}}}
\newcommand{\crit}{\operatorname{Crit}}

\newcommand{\dist}{\operatorname{dist}}
\newcommand{\diam}{\operatorname{diam}}

\renewcommand{\mod}{\operatorname{mod}}

\newcommand{\veps}{\varepsilon}

\numberwithin{equation}{section}

\newcommand{\cA}{{\mathcal A}}

\newcommand{\cP}{{\mathcal P}}

\newcommand{\cH}{{\mathcal H}}
\newcommand{\cR}{{\mathcal R}}

\newcommand{\RR}{{\Bbb R}}
\newcommand{\TT}{{\Bbb T}}
\newcommand{\ZZ}{{\Bbb Z}}
\newcommand{\NN}{{\Bbb N}}
\newcommand{\DD}{{\Bbb D}}
\newcommand{\CC}{{\Bbb C}}
\newcommand{\HH}{{\Bbb H}}
\newcommand{\QQ}{{\Bbb Q}}
\newcommand{\hol}{\Bbb H}

\title[Complex bounds for multicritical circle maps with bounded type ...]{Complex a priori bounds for multicritical circle maps with bounded type rotation number}

\author{Gabriela Estevez}
\address{Instituto de Matemática, Universidade Federal do Rio de Janeiro}
\curraddr{Av. Athos da Silveira Ramos 149, CEP 21945-909. Rio de Janeiro, RJ, Brazil}
\email{gaestevezja@gmail.com}

\author{Daniel Smania}
\address{Instituto de Ciências Matemáticas e de Computação, Universidade de S\~ao Paulo}
\curraddr{Av. Trab. São Carlense 400, CEP 13566-590. 
S\~ao Carlos, SP, Brazil}
\email{daniel.smania@gmail.com}

\author{Michael Yampolsky}
\address{Department of Mathematics, University of Toronto}
\curraddr{40 St George Street, Toronto, Ontario, Canada}
\email{yampol@math.toronto.edu}

\subjclass[2010]{Primary 37E10; Secondary 37E20, 37F25}

\keywords{Renormalization; Multicritical critical circle maps; Complex bounds; Real bounds; Rigidity.}

\thanks{G.E. was partially supported by the Coordenação de Aperfeiçoamento de Pessoal de Nível Superior - Brasil (CAPES) - Finance Code 001. D.S. was partially supported by CNPq 306622/2019-0, CNPq 430351/2018-6 and FAPESP Projeto Temático 2017/06463-3. M.Y. was partially supported by NSERC Discovery Grant.}

\begin{document}

\begin{abstract} 
In this paper we study homeomorphisms of the circle with several critical points and bounded type rotation number. We prove complex a priori bounds for these maps. As an application, we get that bi-cubic circle maps with same bounded type rotation number are $C^{1+\alpha}$ rigid.
\end{abstract}

\maketitle

\section{Introduction}
Complex {\it a priori} bounds have emerged as a key analytic tool in one-dimensional dynamics. They provide the analytic foundation for the results in one-dimensional  Renormalization theory,  rigidity, density of hyperbolicity, and local connectivity of Julia sets and the Mandelbrot set.
Speaking informally, they are the bounds on the size of the domains of the analytic continuations of the first return maps corresponding to renormalizations of one-dimensional dynamical systems.
In this paper we prove complex {\it a priori}  bounds for  multicritical circle maps with rotation numbers of bounded type. This generalizes the results of \cite{Yam2019}, where they were
obtained under the assumption that the rotation number is a quadratic irrational, which is a particular case of bounded type.

Similarly to \cite{Yam2019}, we apply the bounds to the case of bi-cubic circle maps, and prove that such maps with irrational rotation numbers of bounded type are $C^{1+\alpha}$-rigid: that is, a topological conjugacy which maps critical points to critical points 
must be $C^{1+\alpha}$-regular.

\section{Preliminaries}
We will refer to the affine manifold $\TT=\RR/\ZZ$ as the circle, and will identify it as needed with the unit circle $S^1$ via the exponential map $x\mapsto e^{2\pi i x}$. For a homeomorphism $f:\TT\to\TT$ we will denote by $\rho(f)\in(0,1)$ its rotation number.

For $\alpha>0$, we let
$$G(\alpha)\equiv \left\{\frac{1}{\alpha}\right\}$$
be the Gauss map. Starting with $\alpha\in(0,1)$ we consider the orbit
$$\alpha_0\equiv \alpha, \, \dots  \,, \alpha_{n}=G(\alpha_{n-1}) \, ,\dots$$
It is finite if and only if $\alpha$ is rational, in which case we will end it at the last non-zero term. The numbers $a_n=[1/\alpha_n]$ for  $n\geq 0$ are the coefficients of the continued fraction expansion of $\alpha$ with positive terms (which is defined uniquely if and only if $\alpha\notin\QQ$). We will denote such continued fraction as
$$\alpha=[a_0,a_1,\ldots].$$
We say that $\alpha\in[0,1]\setminus\QQ$ is of a {\it type bounded by }$K\in\NN$ if $\sup a_i\leq K$. We will refer to the  union of such numbers for all $K\in\NN$ as irrationals of {\it bounded type}, note that this class coincides with Diophantine numbers of order $2$.
We will let $R_\alpha(x)\equiv x+\alpha \,\mod \ZZ$ denote the rigid rotation by angle $\alpha$.

Given two positive numbers $a$, $b$  we say that they are $C$-commensurable for $C>1$, and we denote it by $a \asymp_C b$, if
$$\frac{1}{C}\leq \frac{a}{b}\leq C.$$
We will say that $a$ and $b$ are universally commensurable, or simply commensurable, if the constant $C$ is universal. In that case we denote it by $a \asymp b$. Two sets in the plane are $C$-commensurable if their diameters are $C$-commensurable.

We will use $\diam(A)$ to denote the Euclidean diameter of a bounded set $A\subset \CC$. We let $\DD_r(z)$ be the open disk of radius $r$ centered at $z\in\CC$; $\DD$ will stand for the unit disk. $$\displaystyle U_r(A)=\cup_{z\in A}\DD_r(z)$$
will stand for the $r$-neighborhood of a set $A$. 
We denote
$$\dist(A,B)=\inf\{r>0\;|\; U_r(A)\cap B\neq \emptyset\}$$
the Euclidean distance between $A$ and $B$.

\begin{figure}[h]
\includegraphics[width=0.77\textwidth]{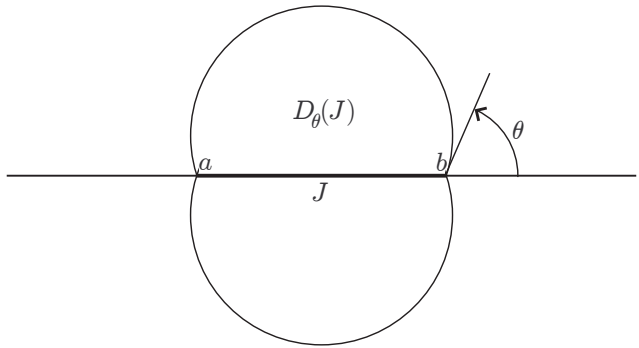}
\caption{\label{fig:poincare}A Poincar\'e neighborhood $D_\theta(J)$.}
\end{figure}

Given $J=(a,b)$, a subinterval in the real line, let
$\C_{J} =(\C\setminus \R) \cup J$.
Following Sullivan \cite{Su}, we let the {\emph Poincar\'e neighborhood} of $J$ of radius $r>0$ to be the  set of points in $\C_{J}$ such that their hyperbolic distance
in $\C_J$ to $J$ is less or equal to $r$. A Poincar\'e neighborhood is an $\RR$-symmetric union of two Euclidean disks with a  common chord $J$. If we denote the external angle between one of the boundary circles of such a neighborhood with $\RR$ by $\theta\in (0,\pi)$, then
$$r=r(\theta)=\log \cot (\theta/4).$$
It is more convenient for us to identify a Poincar\'e neighborhood by the external angle $\theta$ rather than the hyperbolic radius $r$, so we will use the notation $D_\theta(J)$ (see Figure~\ref{fig:poincare}).
Clearly, if $\theta_1<\theta_2$ then $D_{\theta_2}(J) \subset D_{\theta_1}(J)$ and so $r(\theta_2) <r(\theta_1)$.
We let $D_{\pi/2}(J)\equiv D(J)$; this is the Euclidean disk with diameter $J$.
We note that 
\begin{equation}\label{eq:diamD}
 \displaystyle \diam(D_{\theta}(J))= \left(\frac{1+\cos \theta}{\sin \theta} \right)|J|.
  \end{equation}

\subsection{Multicritical circle maps} We say that $f$ is a $C^{3}$ multicritical circle map if it is a $C^3$ orientation preserving circle homeomorphism with a finite number of {\it non-flat\/} critical points. That means that for each critical point $c$ there exist $d \in 2\nt +1$ ($d$ is called the criticality of $c$), a neighbourhood $W$ of $c$ and an orientation preserving diffeomorphism $\phi$ satisfying $\phi(c)=0$ such that for all $x \in W$, 
\begin{equation*}
f(x)=f(c)+\big(\phi(x)\big)^{d}. 
\end{equation*}
In the space of analytic maps, we say that an analytic multicritical circle map is just an analytic orientation preserving circle homeomorphism with a finite number of critical points.

We assume  that the rotation number of $f$ is irrational. By a result of Yoccoz \cite{Yoc1984}, this implies that $f$ is topologically conjugate with the rigid rotation by the angle $\rho(f)$. This clearly implies the existence of a unique ergodic $f$-invariant measure, which is the pullback of the Lebesgue measure by the conjugacy. We denote this measure by $\mu_f$. We define the \emph{signature} of a multicritical circle map $f$ to be the $(2N+2)$-tuple 
\[
(\rho(f) \,;N;\,d_0,d_1,\ldots,d_{N-1};\,\delta_0,\delta_1,\ldots,\delta_{N-1}),
\]
where $N$ is the number of critical points, $\rho(f)$ is the rotation number of $f$,  $d_i$ is the criticality of the critical point $c_i$, and $\delta_i=\mu_f[c_i,c_{i+1})$ (with the convention that $c_{N}=c_0$).

\subsection{Dynamical partitions}

Let $f$ be a multicritical circle map with irrational rotation number $\rho(f)$, and continued fraction expansion given by $\rho(f) \;=\; [a_{0} , a_{1} , \cdots ]$. Let us consider the continued  fraction convergents
$p_n/q_n$ obtained by truncating the expansion at level $n-1$, that is, $p_n/q_n\;=\;[a_0,a_1, \cdots ,a_{n-1}]$.  The sequence of denominators $\{q_n\}_{n\in\nt}$ satisfies the recursive formula 
\begin{equation*}
q_{0}=1, \hspace{0.4cm} q_{1}=a_{0}, \hspace{0.4cm} q_{n+1}=a_{n}\,q_{n}+q_{n-1} \hspace{0.3cm} \text{for all $n \geq 1$} .
\end{equation*}
Moreover, for $x \in S^1$ and $n \in \nt$, the iterates  $\{f^{q_n}(x)\}$ are closest returns of $x$ in the following sense: denote $I_n(x)=[x,f^{q_n}(x)]$ the arc of the circle connecting these two points and not containing $f^{q_{n+1}}(x)$. Then
$[x,f^{q_n}(x)]$ does not contain any iterates smaller than $q_n$ in the orbit of $x$.

For $a,b\in\TT$ we will denote $[a,b]$ the arc of the circle obtained as $\psi^{-1}(A)$ where $\psi$ is a conjugacy between $f$ and $R_{\rho(f)}$ and $A$ is the shorter of the two arcs connecting $\psi(a)$ with $\psi(b)$.

The collection of intervals
\[
 \mathcal{P}_n(x) \ = \ \left\{ f^{i}(I_n(x)):\;0\leq i\leq q_{n+1}-1 \right\} \;\bigcup\; 
 \left\{ f^{j}(I_{n+1}(x)):\;0\leq j\leq q_{n}-1 \right\} 
\]
is a partition of the circle by closed intervals intersecting only at their endpoints. It is called the {\it $n$-th dynamical partition\/} associated to the point $x$ (see \cite[Section 1.1, Lemma 1.3, page 26]{dMvS} or \cite[Appendix]{EdFG}). For each $n \in \nt$, we will refer to the intervals $I_n(x)$ and $I_{n+1}(x)$ as {\it fundamental intervals} of the dynamical partition $\mathcal{P}_n(x)$.

The dynamical partitions $\mathcal{P}_n(x)$ form a sequence of (non-strict) refinements: the intervals $I_n^j(x)$ for $0\leq j\leq q_{n+1}-1$ are subdivided by exactly $a_{n+1}$ intervals belonging to $\mathcal{P}_{n+1}(x)$ while the intervals $I_{n+1}^i(x)$ for each $0\leq i\leq q_n-1$ remain invariant, see Figure \ref{fig:partition} below.

\begin{figure}[!ht]
\centering
\psfrag{x}[][][1]{$x$} 
\psfrag{e}[][][1]{$f^{q_{n+1}}(x)$}
\psfrag{d}[][][1]{$f^{q_n}(x)$}
\psfrag{Pn}[][][1]{$\mathcal{P}_n(x)$} 
\psfrag{Pn+1}[][][1]{$\mathcal{P}_{n+1}(x)$}
\psfrag{In}[][][1]{$I_n$}
\psfrag{In+1}[][][1]{$I_{n+1}$} 
\psfrag{In+2}[][][1]{$I_{n+2}$} 
\psfrag{In+1qn}[][][1]{$I_{n+1}^{q_n}$} 
\psfrag{In+1qn+qn+1}[][][1]{$I_{n+1}^{q_n+q_{n+1}}$}
\psfrag{...}[][][1]{$\cdots$} 
\includegraphics[width=4.1in]{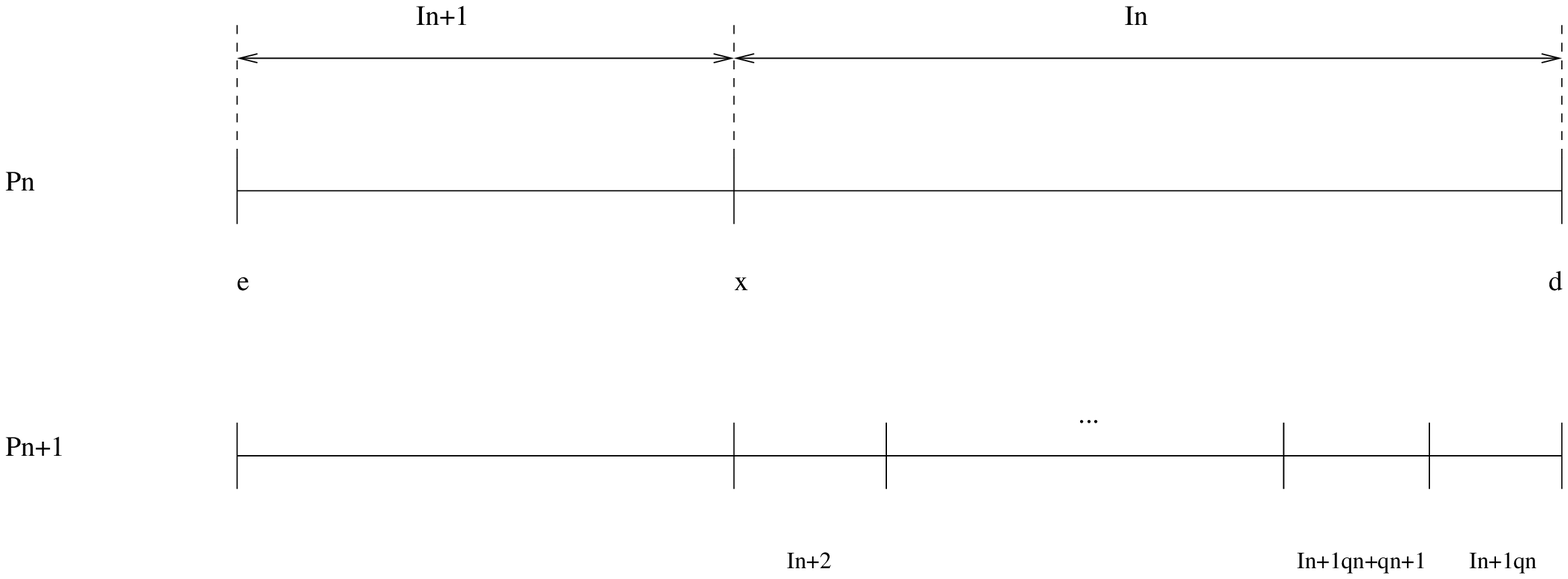}
  \caption{\label{fig:partition} Two consecutive dynamical partitions.}
\end{figure}

Following the convention introduced by Sullivan \cite{Su}, we say that for a map $f$ a quantity is ``beau'' (which translates as ``bounded and eventually universally (bounded)''), if it is bounded and the bound becomes universal (that is, independent of the map).

Let $c \in \crit(f)$, from now on we will consider $n$ bigger enough such that the adjacent intervals $I_{n+1}(c)$ and  $I_n(c)$, do not contain any other critical point of $f$. The following fundamental geometric control was obtained by Herman \cite{H} and Światek \cite{G} in the 1980's. A detailed proof of Theorem \ref{teobeau} can be found in \cite{EdFG}. 

\begin{theorem}[Real Bounds]\label{teobeau} 
Let $f$ be a multicritical circle map with irrational rotation number and $N$ critical points, and let $c$ be any of its critical points. Then there exists $n_0 \in \mathbb{N}$ such that for all $n\geq n_0$ the iterate $f^{q_{n+1}}|_{I_n(c)}$ is decomposed as
\[
 f^{q_{n+1}}|_{I_n(c)}= \psi_{m+1} \circ p_m \circ \psi_m\circ p_{m-1} \circ \dots \circ \psi_1 \circ p_0 \circ \psi_0,
\]
where $m \leq N +1$, $p_j(x)=x^{d_j}$ for $d_j$ an odd integer, and  each $\psi_j$ is an interval diffeomorphism with  beau distortion. 
\end{theorem}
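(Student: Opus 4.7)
The plan is to locate the (bounded number of) moments in the orbit of $I_n(c)$ under $f$ at which an iterate passes through a critical point, factor those passages through pure power maps provided by the non-flat hypothesis, and use a Świątek-type cross-ratio distortion inequality to control the diffeomorphic pieces in between.

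First, I count the critical passages. The intervals
\[
I_n(c),\ f(I_n(c)),\ \ldots,\ f^{q_{n+1}-1}(I_n(c))
\]
appear in the dynamical partition $\cP_n(c)$ and are therefore pairwise disjoint by the closest-return property. Hence each of the $N$ critical points of $f$ lies in at most one of them, which gives at most $N$ interior critical passages $0<j_1<\cdots<j_m<q_{n+1}$; adjoining the passage at the boundary point $c$ itself accounts for the bound $m\le N+1$.

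Between two consecutive passages, the iterate $f^{j_{k+1}-j_k}$ restricts to a diffeomorphism whose intermediate images stay away from every critical point of $f$ by a definite amount, thanks to the classical commensurability of neighboring intervals of $\cP_n(c)$ (Herman \cite{H}, Świątek \cite{G}), which is beau for $n$ large. Combined with the cross-ratio distortion inequality for $C^3$ circle maps with non-flat critical points, whose total error along a composition is controlled by a sum of cross-ratio distortions along pairwise disjoint intermediate images and therefore independent of the length of the composition, this restriction is an interval diffeomorphism with beau distortion. These yield the intermediate factors $\psi_j$ with $1\le j\le m$; the outer pieces $\psi_0$ and $\psi_{m+1}$ are handled by the same argument.

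At each critical passage through $c'$ with criticality $d'$, the non-flat hypothesis supplies a neighborhood $W$ of $c'$ and a $C^3$ diffeomorphism $\phi$ with $f(x)=f(c')+\phi(x)^{d'}$ on $W$. For $n$ large enough, the relevant interval containing $c'$ lies inside $W$, so $f$ factors there as $\tau\circ p\circ\phi$ with $p(x)=x^{d'}$ and $\tau$ the translation by $f(c')$; absorbing the fixed-in-$n$ factors $\phi$ and $\tau$ into the neighboring $\psi_j$'s preserves beau distortion by the chain rule. Assembling the pieces in order yields the desired decomposition, along the lines detailed in \cite{EdFG}.

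The main obstacle is to ensure that the distortion bound is genuinely beau and not merely bounded. This requires (i) that the commensurability constants of the classical real bounds stabilize to universal values as $n\to\infty$, and (ii) that the cross-ratio distortion control for long compositions depends only on these constants and on a telescoping sum over disjoint intermediate iterates, not on the number of iterations. Once both ingredients are in place the factorization assembles as stated with $m\le N+1$ and each $\psi_j$ having beau distortion.
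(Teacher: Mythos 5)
The paper does not actually prove Theorem~\ref{teobeau}: it attributes the result to Herman~\cite{H} and~\cite{G}, and refers the reader to~\cite{EdFG} for a detailed proof, so there is no in-paper argument against which to compare your proposal. Your sketch does reproduce the broad architecture one finds in those sources: the intervals $f^j(I_n(c))$, $0\le j<q_{n+1}$, lie in the dynamical partition $\cP_n(c)$ and hence have pairwise disjoint interiors, so each of the $N$ critical points of $f$ is met at most once and the number of critical passages is bounded; each passage factors through a pure power map $x\mapsto x^{d_j}$ via the non-flatness hypothesis; and the diffeomorphic stretches in between are controlled by cross-ratio distortion summed over a collection of bounded overlap.

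There are, however, two substantive gaps. First, you invoke the commensurability of adjacent intervals of $\cP_n(c)$ as an input, but in the logical organization of this paper that commensurability is Corollary~\ref{corollarybeau}, stated as a \emph{consequence} of Theorem~\ref{teobeau}; to avoid circularity you would need either to cite it as an independently established fact for multicritical maps (the correct reference is~\cite{EdF}, not the unicritical Herman--Swiatek theory), or to run the cross-ratio estimates and the commensurability together by induction on $n$, which is what~\cite{EdFG} actually does. Second, and more seriously, the whole content of the adjective ``beau'' is that the distortion and commensurability constants become \emph{universal}, independent of the particular map $f$, for $n$ large. Your item (i) simply asserts this; it is not a formality. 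In the literature this is obtained by a separate compactness or normal-families argument on rescaled return maps, and nothing of that mechanism appears in your outline, so as written your argument only yields distortion bounded in terms of $f$, not beau distortion. A smaller imprecision: the intermediate images need not ``stay away from every critical point by a definite amount''---they can abut partition intervals that do contain critical points---and what the distortion argument truly requires is definite Koebe space furnished by comparability of adjacent intervals, combined with a Swiatek-type cross-ratio inequality that tolerates the critical passages.
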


An immediate corollary of Theorem \ref{teobeau} is the following result:

\begin{corollary}\label{corollarybeau}
Given $N \in \nt$ and $d>1$ let $\mathcal{F}_{N,d}$ be the family of multicritical circle maps with at most $N$ critical points whose maximum criticality is bounded by $d$. There exists a beau constant $C=C(N,d)>1$ with the following property: for any given $f \in \mathcal{F}_{N,d}$ and $c \in \crit(f)$ there exists $n_0 \in \mathbb{N}$ such that for all $n\geq n_0$ each pair of adjacent intervals $I,J \in \mathcal{P}_{n}(c)$ is $C$-commensurable.
\end{corollary}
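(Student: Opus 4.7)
The plan is to apply Theorem~\ref{teobeau} together with distortion control for its building blocks. The key observation is the following preservation principle: any composition of diffeomorphisms with beau distortion and of at most $N+1$ odd power maps $x\mapsto x^{d_j}$ of degree $d_j\le d$ preserves commensurability of adjacent intervals up to a constant depending only on $N$, $d$, and the beau distortion bound. The diffeomorphism factors are handled by Koebe's distortion lemma, and a direct computation handles the power maps (since two commensurable intervals meeting at $0$ have commensurable images under $x\mapsto x^{d_j}$, with constant depending only on $d_j$).

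Step~1 establishes commensurability of the two fundamental intervals $I_n(c)$ and $I_{n+1}(c)$ with a beau constant. These intervals lie on opposite sides of the critical point $c$, so that for a rigid rotation their ratio can be as large as $a_{n+1}$. The critical folding at $c$ embedded in the decomposition of $g_n := f^{q_{n+1}}|_{I_n(c)}$ provided by Theorem~\ref{teobeau} is what rebalances them: combining the combinatorial tiling of $I_n(c)\setminus I_{n+2}(c)$ by $a_{n+1}$ intervals of $\mathcal{P}_{n+1}(c)$ (each an $f$-iterate of $I_{n+1}(c)$) with the preservation principle applied to $g_n$ yields $|I_n(c)|\asymp |I_{n+1}(c)|$ with constants depending only on $N$ and $d$.

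Step~2 upgrades Step~1 to every pair of adjacent intervals $I,J\in \mathcal{P}_n(c)$. Every element of $\mathcal{P}_n(c)$ is an $f$-iterate of $I_n(c)$ or $I_{n+1}(c)$, so any adjacent pair has the form $(f^k(I^*),f^k(J^*))$ for some $0\le k<q_{n+1}$ and some adjacent base pair $(I^*,J^*)$ attached to a critical point of $f$. The orbit segment $\{f^j\}_{0\le j\le k}$ on a neighborhood of $I^*\cup J^*$ passes through each of the at most $N$ critical points only a uniformly bounded number of times -- the same combinatorial input underlying Theorem~\ref{teobeau} -- so the restriction of $f^k$ to this neighborhood decomposes in the form of Theorem~\ref{teobeau}. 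The preservation principle then transfers commensurability of $(I^*,J^*)$ (from Step~1 applied at $c$ and analogously at each other critical point) to the required $|I|\asymp_C|J|$ with $C=C(N,d)$.

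The main obstacle will be Step~1: rigorously quantifying how the non-linear folding at $c$ rebalances the fundamental intervals on the two sides of $c$. This is the essence of the real-bounds philosophy, and the argument amounts to carefully tracking, through the decomposition of $g_n$ granted by Theorem~\ref{teobeau}, how the balance between the power-map expansion and the beau-distortion diffeomorphisms enforces the universal bound on $|I_n(c)|/|I_{n+1}(c)|$ independently of $a_{n+1}$.
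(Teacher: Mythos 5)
The paper offers no in-text proof of this corollary: it declares the commensurability statement ``an immediate corollary'' of Theorem~\ref{teobeau} and defers to \cite{EdFG} for the underlying argument, so the comparison must be with what a complete proof would actually require. Your own closing remark is accurate: Step~1 is where the substance lies, and your sketch does not supply it.

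The gap is concrete. In Step~1 you want $|I_n(c)|\asymp|I_{n+1}(c)|$ to ``follow from'' the decomposition of $g_n=f^{q_{n+1}}|_{I_n(c)}$ together with the tiling of $I_n(c)\setminus I_{n+2}(c)$ by $a_{n+1}$ iterates of $I_{n+1}(c)$. But the preservation principle you formulate only compares \emph{pairs} of adjacent intervals sharing an endpoint; it gives no handle on the ratio between $I_n(c)$ (a union of $a_{n+1}+1$ tiles of $\mathcal{P}_{n+1}(c)$, with $a_{n+1}$ unbounded) and the single interval $I_{n+1}(c)$. For a rigid rotation that ratio is of order $a_{n+1}$, and the entire content of the Herman--Światek real bounds is precisely the demonstration that the critical nonlinearity forces it to stay bounded --- an argument resting on cross-ratio inequalities and Koebe/Denjoy distortion estimates, which your sketch neither reproduces nor replaces. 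There is also a circularity risk in treating Theorem~\ref{teobeau} as a black-box input: the assertion there that the diffeomorphic factors $\psi_j$ have \emph{beau} distortion is itself obtained in \cite{EdFG} using the commensurability of adjacent partition intervals, i.e.\ using the very statement you are trying to derive. Your Step~2 and the preservation principle are the easy and essentially standard part; Step~1 requires the genuine real-bounds argument, not a formal deduction from the statement of the decomposition.
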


\subsection{Renormalization of multicritical circle maps}

In this section, we recall the notion of \textit{multicritical commuting pair}, which is a generalization of \textit{critical commuting pair}  introduced in \cite{ORSS}. This notion will let us to define the renormalization of a multicritical circle map.

\begin{definition}\label{multcommpairs} A $C^3$ (or $C^{\infty}$) multicritical commuting pair is a pair $\zeta=(\eta,\xi)$ consisting of two $C^3$ orientation preserving interval homeomorphisms $\xi:I_\xi \rightarrow  \xi(I_\xi)$ and $\eta : I_\eta \rightarrow \eta(I_{\eta})$ 
satisfying:
 \begin{enumerate}
  \item $I_{\xi}=[\eta(0), 0]$ and $I_{\eta}=[0, \xi(0)]$ are compact intervals in the real line;
  \item the origin has odd integer criticality for $\eta$ and for $\xi$;
  \item $\xi$ and $\eta$ satisfy the commuting property: $(\eta \circ \xi)(0)=(\xi \circ \eta)(0) \neq 0$;
  \item $\xi$ and $\eta$, contain others  critical points (with odd integers criticalities) in theirs domains, $I_\xi$ and $I_\eta$;
  \item both $\xi$ and $\eta$, have homeomorphic extensions to some interval neighborhoods $V_{\xi}$ and $V_{\eta}$, of $I_\xi$ and $I_\eta$, with same smoothness $C^3$ (or $C^{\infty}$) preserving the commuting property.
 \end{enumerate}
\end{definition}

Let $f$ be a $C^r$ multicritical circle map with irrational rotation number $\rho(f)$ and critical points $c_{0}, \dots, c_{N-1}$. For each critical point $c_j$, we can define a multicritical commuting pair in the following way: let $\widehat{f}$ be the lift of $f$ (under the universal covering $t \mapsto c_j\cdot\exp(2\pi i t)$) such that $0< \widehat{f}(0)<1$ (and note that $D\widehat{f}(0)=0$). For $n\geq 1$, let $\widehat{I}_{n}(c_j)$ be the closed interval in $\R$, containing the origin as one of its extreme points, which is projected onto $I_{n}(c_j)$. We define $\xi: \widehat{I}_{n+1}(c_j) \rightarrow \R$ and $\eta: \widehat{I}_{n}(c_j) \rightarrow \R$ by $\xi= T^{-p_{n}}\circ \widehat{f}^{q_{n}}$ and $\eta= T^{-p_{n+1}}\circ \widehat{f}^{q_{n+1}}$, where $T$ is the unit translation $T(x)=x+1$. Then the pair $(\eta|_{\widehat{I}_{n}(c_j)}, \xi|_{\widehat{I}_{n+1}(c_j)})$ is a multicritical commuting pair, that we denote by $(f^{q_{n+1}}|_{I_n(c_j)}, f^{q_n}|_{I_{n+1}(c_j)})$.\\

We restrict our attention to \emph{normalized} multicritical commuting pairs: for any given pair $\zeta=(\eta,\xi)$ we denote by $\widetilde{\zeta}$ the pair $(\widetilde{\eta}|_{\widetilde{I_{\eta}}}, \widetilde{\xi}|_{\widetilde{I_{\xi}}})$, where tilde means linear rescaling by the factor $1/|I_{\xi}|$. Note that $|\widetilde{I_{\xi}}|=1$ and $\widetilde{I_{\eta}}$ has length equal to the ratio between the lengths of $I_{\eta}$ and $I_{\xi}$. Equivalently $\widetilde{\eta}(0)=-1$ and $\widetilde{\xi}(0)=|I_{\eta}|/|I_{\xi}|=\xi(0)/\big|\eta(0)\big|$.

\begin{definition} We define the \emph{height} of the pair $\zeta=(\eta, \xi)$ as the natural number $a$ such that$$ \eta^{a+1}(\xi(0)) <0\leq \eta^{a}(\xi(0)),$$when such number exists, and we denote it by $\chi(\zeta)$. If such $a$ does not exist, that is, when $\eta$ has a fixed point, we define $\chi(\zeta)=\infty$.
\end{definition}

\begin{definition}\label{defren} Let $\zeta=(\eta, \xi)$ be a multicritical commuting pair with $(\xi \circ \eta)(0) \in I_{\eta}$ and $\chi(\zeta)=a< \infty$. We define the \emph{pre-renormalization} of $\zeta$ as the pair$$p\mathcal{R}(\zeta) =(\eta|_{[0, \eta^{a}(\xi(0)) ]} \ , \ \eta^{a}\circ \xi|_{I_{\xi}} ).$$
Moreover, we define the \emph{renormalization} of $\zeta$ as the normalization of $p \mathcal{R}(\zeta)$:$$\mathcal{R}(\zeta)= \left(\widetilde{\eta}|_{[0,\widetilde{\eta^{a}(\xi(0))} ]} \ , \ \widetilde{\eta^{a}\circ \xi}|_{\widetilde{I}_{\xi}}
  \right).$$
\end{definition}

If $\zeta$ is a multicritical commuting pair with $\chi(\mathcal{R}^{j}\zeta)< \infty$ for $0 \leq j \leq n-1$, we say that $\zeta$ is
 \textit{$n$-times renormalizable}, otherwise, if $\chi(\mathcal{R}^{j}\zeta)< \infty$  for all $j \in \nt$, we say that $\zeta$ is \textit{infinitely renormalizable}. In the last case, we define the \textit{rotation number} of the multicritical commuting pair $\zeta$, and denote it by $\rho(\zeta)$, as the irrational number whose continued fraction expansion is given by$$[\chi(\zeta), \chi(\mathcal{R}\zeta), \cdots, \chi(\mathcal{R}^{n}\zeta), \cdots ].$$
  
 Its normalization will be denoted by $\mathcal{R}_i^{n}f$, that is:$$\mathcal{R}_i^{n}f=\left(\widetilde{f}^{q_{n+1}}|_{\widetilde{I_n}(c_i)}, \widetilde{f}^{q_{n}}|_{\widetilde{I_{n+1}}(c_i)}\right).$$

Observe that $\rho(\mathcal{R}(\zeta)))=G(\rho(\zeta)))$, where $G$ is the Gauss map.

\section{Complex {\it a priori} bounds}\label{sec:complexbounds}
\subsection{Holomorphic commuting pairs}
We recall the definition of a holomorphic commuting pair given in \cite{Yam2019}, which generalizes the orginal definition of de~Faria \cite{dF2}. 

\begin{definition}\label{def:hcp}
 Given an analytic multicritical commuting pair $\zeta=(\eta|_{I_{\eta}}, \xi_{I_{\xi}})$, we say that it extends to a \emph{holomorphic commuting pair} $\mathcal{H}$, if there exist three simply-connected and $\R-$symmetric domains $D,U,V\subseteq \C$, whose intersections with the real line are denoted by $I_U=U \cap \R$, $I_V=V \cap \R$ and $I_D=D \cap \R$ and a simply connected $\R-$symmetric Jordan domain $\Delta$ that satisfy the following,
\begin{enumerate}
 \item the endpoints of $I_U$ and $I_V$ are critical points of $\eta$ and $\xi$, respectively;
 \item  $\overline{D}, \overline{U}, \overline{V}$ are contained in $\Delta$; $\overline{U}\cap \overline{V} = \{0\} \subseteq D$; the sets $U \setminus D, V\setminus D, D \setminus U$ and $D \setminus V$ are non-empty, connected and simply-connected; $I_{\eta} \subset I_U \cup \{0\}$, $I_{\xi}\subset I_V \cup \{0\}$;
 \item $U \cap \mathbb{H}$, $V \cap \mathbb{H}$ and  $D \cap \mathbb{H}$ are Jordan domains;
 \item the maps $\eta$ and $\xi$ have analytic extensions to $U$ and $V$, respectively, so that $\eta$ is a branched covering map of $U$ onto $(\Delta \setminus \R) \cup \eta(I_U)$, and $\xi$ is a branched covering map of $V$ onto $(\Delta \setminus \R)\cup \xi(I_V)$, with all the critical points of both maps contained in the real line;
 \item the maps $\eta:U \to \Delta$ and $\xi: V \to \Delta$ can be extended to analytic maps $\widehat{\eta}:U \cup D \to \Delta$ and $\widehat{\xi}:V \cup D \to \Delta$, so that the map $\nu=\widehat{\eta}\circ \widehat{\xi}= \widehat{\xi}\circ \widehat{\eta}$ is defined in $D$ and is a branched covering of $D$ onto $(\Delta\setminus \R)\cup \nu(I_D)$ with only real branched points.
\end{enumerate}
\end{definition}

\begin{figure}[!ht]
\centering
\psfrag{Delta}[][][1]{$\Delta$} 
\psfrag{V}[][][1]{$V$}
\psfrag{U}[][][1]{$U$}
\psfrag{D}[][]{$D$} 
\psfrag{fv}[][][1]{$\xi$}
\psfrag{fn}[][][1]{$\eta$}
\psfrag{fd}[][][1]{$\nu$} 
\psfrag{I}[][][1]{$I_{\xi}$} 
\psfrag{J}[][]{$I_{\eta}$} 
\psfrag{M}[][]{$I_{D}$} 
\psfrag{v}[][]{$I_{V}$} 
\psfrag{n}[][]{$I_{U}$} 
\includegraphics[width=4in]{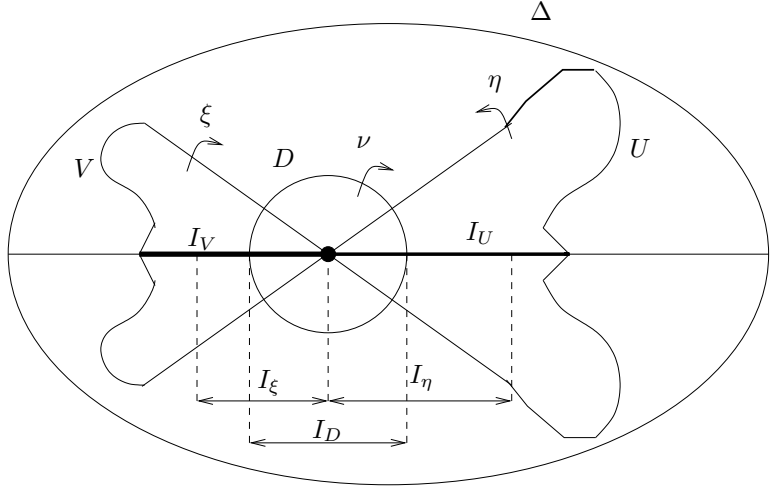}
  \caption{\label{fig:hcp} A holomorphic commuting pair.}
\end{figure}

\noindent
We shall identify a holomorphic pair $\cH$ with a triple of maps $\cH=(\eta,\xi, \nu)$, where $\eta\colon U\to\Delta$, $\xi\colon V\to\Delta$ and $\nu\colon D\to\Delta$. We shall also call $\zeta$ the {\it commuting pair underlying $\cH$}, and write $\zeta\equiv \zeta_\cH$. When no confusion is possible, we will use the same letters $\eta$ and $\xi$ to denote both the maps of the commuting pair $\zeta_\cH$ and their analytic extensions to the corresponding domains $U$ and $V$.

The sets $\Omega_\cH=D\cup U\cup V$ and $\Delta\equiv\Delta_\cH$ will be called \textit{the domain} and \textit{the range} of a holomorphic pair $\cH$. We will sometimes write $\Omega$ instead of $\Omega_\cH$, when this does not cause any confusion. 

We can associate to a holomorphic pair $\cH$  a piecewise defined map $S_\cH\colon\Omega\to\Delta$:
\begin{equation*}
S_\cH(z)=\begin{cases}
\eta(z),&\text{ if } z\in U,\\
\xi(z),&\text{ if } z\in V,\\
\nu(z),&\text{ if } z\in\Omega\setminus(U\cup V).
\end{cases}
\end{equation*}
De~Faria \cite{dF2} calls $S_\cH$ the {\it shadow} of the holomorphic pair $\cH$.

We can naturally view a holomorphic pair $\cH$ as three triples
$$(U,\xi(0),\eta),\;(V,\eta(0),\xi),\;(D,0,\nu).$$
We say that a sequence of holomorphic pairs converges in the sense of Carath{\'e}odory convergence, if the corresponding triples do.
We denote the space of triples equipped with this notion of convergence by $\hol$.

We let the {\emph modulus of a holomorphic commuting pair} $\cH$, which we denote by $\mod(\cH)$ to be the modulus of the largest annulus $A\subset \Delta$,
which separates $\CC\setminus\Delta$ from $\overline\Omega$.

\begin{definition}\label{H_mu_def}
For $\mu\in(0,1)$ let $\hol(\mu)\subset\hol$ denote the space of holomorphic commuting pairs
${\cH}:\Omega_{{\cH}}\to \Delta_{\cH}$, with the following properties:
\begin{enumerate}
\item $\mod (\cH)\ge\mu$;
\item {$|I_\eta|=1$, $|I_\xi|\ge\mu$} and $|\eta^{-1}(0)|\ge\mu$; 
\item $\dist(\eta(0),\partial V_\cH)/\diam V_\cH\ge\mu$ and $\dist(\xi(0),\partial U_\cH)/\diam U_\cH\ge\mu$;
\item {the domains $\Delta_\cH$, $U_\cH\cap\HH$, $V_\cH\cap\HH$ and $D_\cH\cap\HH$ are $(1/\mu)$-quasidisks.}
\item$\diam(\Delta_{\cH})\le 1/\mu$;
\end{enumerate}
\end{definition}

Let the {\it degree} of a holomorphic pair $\cH$ denote the maximal topological degree of the covering maps constituting the pair. Denote by $\hol^K(\mu)$ the subset of $\hol(\mu)$ consisting of pairs whose degree is bounded by $K$. The following is an easy generalization of Lemma 2.17 of \cite{Yam3}:  

\begin{lemma}
\label{bounds compactness}
For each $K\geq 3$ and $\mu\in(0,1)$ the space $\hol^K(\mu)$ is sequentially compact.
\end{lemma}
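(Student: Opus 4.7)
The plan is a standard normal-families argument: conditions (1)--(5) of Definition~\ref{H_mu_def} provide enough uniform geometric control to extract a convergent subsequence, and the degree bound $K$ together with non-degeneracy of the real-line dynamics rule out collapse in the limit. Let $\cH_n=(\eta_n,\xi_n,\nu_n)\in\hol^K(\mu)$ be an arbitrary sequence. Condition (2) fixes the real data up to translation: after normalizing so that $0$ remains at the origin, we have $\xi_n(0)=1$ and $\eta_n(0)\in[-1/\mu,-\mu]$; condition (5) then places all of $\Delta_n$, $U_n$, $V_n$, $D_n$ inside a fixed disk $\DD_R(0)$ with $R=R(\mu)$.

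The first step is to extract Carath\'eodory limits of the domains. Condition (4) makes the $\R$-symmetric Jordan domains $\Delta_n$, $U_n\cap\HH$, $V_n\cap\HH$, $D_n\cap\HH$ uniformly $(1/\mu)$-quasidisks, and condition (3) supplies interior basepoints at definite relative distance from their boundaries. By the standard compactness of the space of $K$-quasidisks of bounded diameter with a marked interior point, I pass to a subsequence along which each of these families converges in the Hausdorff topology to a $(1/\mu)$-quasidisk. $\R$-symmetry yields Hausdorff convergence of the full domains to $\R$-symmetric limits $\Delta_\infty$, $U_\infty$, $V_\infty$, $D_\infty$; since all basepoints lie in compact sets, this amounts to Carath\'eodory convergence of the three triples constituting $\cH_n$.

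The second step handles the maps. Since each $\eta_n\colon U_n\to\Delta_n$ takes values in the bounded set $\Delta_n\subset\DD_R(0)$, Montel's theorem makes $\{\eta_n\}$ normal on any compact subset of $U_\infty$. Extracting a further subsequence, $\eta_n\to\eta_\infty$, $\xi_n\to\xi_\infty$ and $\nu_n\to\nu_\infty$ uniformly on compact subsets of the corresponding limit domains, and the commutation relation $\nu_\infty=\eta_\infty\circ\xi_\infty=\xi_\infty\circ\eta_\infty$ is inherited from the $\cH_n$. To identify the limits as branched covers onto $\Delta_\infty$ of degree at most $K$, I invoke Hurwitz's theorem: each of $\eta_\infty,\xi_\infty,\nu_\infty$ is either constant or a proper branched cover of degree bounded by the degrees of the approximants, hence by $K$. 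Constancy is ruled out because the real intervals $I_{\eta_n}$, $I_{\xi_n}$ have definite length by (2), and their images under the pair maps (which include adjacent intervals of the underlying commuting pair) also have definite length, forcing $\eta_\infty,\xi_\infty,\nu_\infty$ to have non-trivial image.

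Finally, I verify each of (1)--(5) for $\cH_\infty$. Conditions (2), (3), (5) are closed under Carath\'eodory convergence with fixed real-line normalization; (4) is closed in the Hausdorff topology for $(1/\mu)$-quasidisks; the modulus bound in (1) follows from continuity of modulus under nested Carath\'eodory convergence of annuli of uniform geometric type, yielding $\mod(\cH_\infty)\ge\mu$. The main obstacle is the non-degeneracy step: without (4) and the definite interval lengths in (2), the limit domains could pinch and the limit maps could collapse to constants or drop in degree, destroying the branched-cover structure. The combination of uniform quasidisk bounds and non-degenerate real dynamics is precisely what keeps the limit inside $\hol^K(\mu)$.
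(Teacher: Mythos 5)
The paper itself does not supply a proof of this lemma; it simply calls it ``an easy generalization of Lemma~2.17 of \cite{Yam3}'' and moves on, so there is no in-text argument to compare against. Your normal-families argument (quasidisk compactness for the domains, Montel and Hurwitz for the maps, passing conditions (1)--(5) to the limit) is the right approach and is essentially what the cited source does.

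Two places deserve more care. First, you write that condition (3) ``supplies interior basepoints at definite relative distance from their boundaries'' for all four families $\Delta_n$, $U_n$, $V_n$, $D_n$. But condition (3) only controls $\dist(\eta(0),\partial V)$ and $\dist(\xi(0),\partial U)$; it says nothing about $D$, and nothing directly about $\Delta$. For $\Delta$ this is easy to repair: the modulus bound (1), together with $I_\eta\subset\overline\Omega$ of length~$1$ and $\diam\Delta\le 1/\mu$, forces $\dist(\overline\Omega,\partial\Delta)$, and in particular $\dist(0,\partial\Delta)$, to be bounded below in terms of $\mu$. For $D$, however, none of (1)--(5) hands you a lower bound on $\dist(0,\partial D_n)$, and without one the Carath\'eodory limit of the marked triple $(D_n,0,\nu_n)$ could a priori degenerate. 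You need an extra step, e.g.\ a Koebe--type distortion estimate for the degree-$\le K$ branched covering $\nu_n\colon D_n\to\Delta_n$ together with the fact that $\nu_n(0)$ and $\Delta_n$ have definite size, to rule out $D_n$ pinching off at the origin. Second, the phrase ``continuity of modulus under nested Carath\'eodory convergence'' in the last paragraph glosses over a semicontinuity issue: $\Delta_\infty$ is only the Carath\'eodory kernel and $\overline\Omega_\infty$ a Hausdorff limit, so the separating annulus does not automatically persist. A cleaner route is to extract a Hausdorff-convergent subsequence of the separating annuli $A_n$ of modulus $\ge\mu$ (they live in $\DD_{1/\mu}$, have modulus bounded above by $\mod(\DD_{1/\mu}\setminus I_\eta)$, hence form a compact family) and then verify that the limiting annulus still lies in $\Delta_\infty$ and still separates $\overline\Omega_\infty$ from $\partial\Delta_\infty$. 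With these two repairs the argument is complete.
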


\noindent
We say that a real commuting pair $\zeta=(\eta,\xi)$ with
an irrational rotation number has
{\it complex {\rm a priori} bounds}, if there exists $\mu>0$ such that all renormalizations of $\zeta=(\eta,\xi)$ extend to 
holomorphic commuting pairs in $\hol(\mu)$.
The existense of complex {\it a priori} bounds is a key analytic issue 
 of renormalization theory. 

\begin{definition}\label{A_r_com_pair_def}
For  $S\subset\CC$ and $r>0$, we let $N_r(S)$ stand for the $r$-neighborhood of $S$ in $\CC$. 
For each $r>0$ we introduce a class $\cA_r$ consisting of pairs   
$(\eta,\xi)$ such that the following holds:
\begin{itemize}
\item $\eta$, $\xi$ are real-symmetric analytic maps defined in the domains 
$$U_r([0,1])\text{ and }U_{r|\eta(0)|}([\eta(0),0])$$
respectively, and continuous up to the boundary of the corresponding domains;
\item the pair $$\zeta\equiv (\eta|_{[0,1]},\xi|_{[\eta(0),0]})$$
is a multicritical commuting pair.
\end{itemize}
\end{definition}

\begin{remark}
  \label{rem:domains}
For simplicity, if $\zeta$ is as above, we will write $\zeta\in\cA_r$. But it is important to note that viewing our multicritical commuting pair $\zeta$ as an element of $\cA_r$ imposes restrictions on where we are allowed to iterate it. Specifically, we view such $\zeta$ as undefined at any point $z\notin U_r([0,\xi(0)])\cup U_{r|\eta(0)|}([0,\eta(0)])$ (even if $\zeta$ can be analytically continued to $z$). Similarly, when we talk about iterates of $\zeta\in\cA_r$ we iterate the restrictions $\eta|_{U_r([0,\xi(0)])}$ and $\xi|_{U_{r|\eta(0)|}([0,\eta(0)]}$.
In particular, we say that the first and second elements of $p\cR\zeta=(\eta^a\circ\xi,\eta)$ are defined in the maximal domains, where the corresponding iterates are defined in the above sense.
\end{remark}

\subsection{Complex bounds for pairs of bounded type}

We will denote $\cA_r^L$ the subset of $\cA_r$ consisting of pairs whose degree is bounded by $L$.
The following statement generalizes Theorem~3.2 of \cite{Yam2019} to all pairs of bounded type.
\begin{theorem}[{\bf Complex bounds for bounded type}]
  \label{thm:bounds}
Let $L\geq 3$, $B\in\NN$.  There exists a constant $\mu>0$  such that the following holds. For every positive real number $r>0$ and every pre-compact family $S\subset\mathcal A_r^L$ of multicritical commuting pairs, there exists $N=N(r, S)\in\NN$ such that if $\zeta\in S$ is
a commuting pair whose rotation number $\rho(\zeta)$ is of type bounded by $B$ then
$p\cR^n\zeta$ restricts to a holomorphic commuting pair $\cH_n:\Omega_n\to\Delta_n$ with $\Delta_n\subset U_r(I_\eta)\cup U_r(I_\xi)$, for all $n\geq N$.
Furthermore, the range $\Delta_n$ is a Euclidean disk,  and the appropriate affine rescaling of $\cH_n$ is in $\hol(\mu)$.
\end{theorem}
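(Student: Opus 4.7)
The plan is to carry out the standard pull-back construction: fix a large level $n$, take $\Delta_n$ to be a Euclidean disk containing the union $I_n(c)\cup I_{n+1}(c)$ of the level-$n$ fundamental intervals at a critical point $c$ of $\zeta$, and pull $\Delta_n$ back through the iterates defining $p\cR^n\zeta$ to produce the sets $U_n,V_n,D_n$ of a holomorphic commuting pair. The bounded-type hypothesis, together with the real-bound factorization of \thmref{teobeau}, will provide uniform control on the pull-back geometry in $n$.

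First, by the pre-compactness of $S\subset\cA_r^L$ and \corref{corollarybeau}, consecutive fundamental intervals at a critical point are universally commensurable and shrink exponentially with $n$. I will choose $N=N(r,S)$ so that for $n\geq N$ every Poincar\'e neighborhood $D_\theta(I_n(c)\cup I_{n+1}(c))$ whose external angle $\theta$ is bounded below by a definite amount lies inside $U_r(I_\eta)\cup U_r(I_\xi)$, the analyticity domain in which $\zeta$ may be iterated. I then set $\Delta_n:=D(I_n(c)\cup I_{n+1}(c))$, which is a Euclidean disk as demanded by the theorem.

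The heart of the argument is the pull-back estimate. \thmref{teobeau} writes each return $f^{q_{k+1}}|_{I_k(c)}$ as a composition of at most $N+1$ beau-distortion diffeomorphisms interleaved with odd power maps of degree $\leq L$. Combined with the bounded-type bound $a_k\leq B$, one step of $p\cR$ is obtained by composing at most $B(N+1)$ such factors. Two classical Poincar\'e-neighborhood estimates then apply at each factor (\cite{Su,dF2}): a beau-distortion diffeomorphism $\psi:J'\to J$ pulls $D_\theta(J)$ back into $D_{\theta'}(J')$ with $\theta'$ bounded below in terms of $\theta$ and the distortion (Sullivan's almost Schwarz inclusion), while the inverse branch of a power map $z\mapsto z^d$ pulls $D_\theta(J)$ back into $D_{\theta/d}(J')$ (interpreted one-sidedly at the critical endpoint, which is allowed since the fundamental intervals touch $c$ only at one end). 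Iterating these two facts through the bounded composition, I will show that the pull-back of $\Delta_n$ is contained in Poincar\'e neighborhoods of the level-$n$ fundamental intervals of $\cR^n\zeta$ at angles bounded below by some $\theta^\ast=\theta^\ast(L,B,N)>0$.

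Assembling the pull-back domains produces the combinatorial structure in Definition~\ref{def:hcp}, yielding a holomorphic commuting pair $\cH_n:\Omega_n\to\Delta_n$. After affine rescaling by $1/|I_n(c)|$, the universal lower bound on $\theta^\ast$ and the commensurability of fundamental intervals together verify the five conditions of Definition~\ref{H_mu_def} for a universal $\mu=\mu(L,B)>0$. The main obstacle I anticipate is preventing the angle from collapsing as $n\to\infty$. The key point is that the pull-back must be performed \emph{one renormalization level at a time}, starting each step from a fresh Euclidean disk at angle $\pi/2$ around the (newly rescaled) fundamental intervals and pulling back only through a composition of length $\leq B(N+1)$. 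The loss of angle per level is then universal, so the lower bound $\theta^\ast$ holds uniformly for all $n\geq N$ and the resulting rescaled pair sits in $\hol(\mu)$ for a single $\mu$ depending only on $L$ and $B$.
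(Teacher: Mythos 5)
Your proposal takes a genuinely different route from the paper. The paper reduces Theorem~\ref{thm:bounds} to Theorem~\ref{maintheorem} and then to Lemma~\ref{mainlemma}, which is a \emph{pointwise} estimate: one tracks a single point $z$ along the backward orbit \eqref{z-orbit}, while the intervals $J_{-k}$ trace the backward orbit \eqref{J-orbit}, and one proves that the relative distance $\dist(z_{-k},J_{-k})/|J_{-k}|$ grows at most affinely. You instead propose to pull back whole Poincar\'e neighborhoods through the factorization of Theorem~\ref{teobeau}, one renormalization level at a time, arguing that the angle loss per level is bounded because the composition has length $\leq B(N+1)$. The schematic is attractive, but there are two concrete gaps.

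First, the power-map step. The factorization in Theorem~\ref{teobeau} has power factors $p_j(x)=x^{d_j}$ at \emph{every} critical point of $f$ visited by the orbit of $I_n(c)$, not only at the renormalization critical point $c$. Only $c$ is guaranteed to sit at an endpoint of the fundamental intervals; the remaining critical points $c_1,\dots,c_{N-1}$ generically lie in the \emph{interior} of the intervals $J_{-k}$, and hence the corresponding critical values lie in the interior of the Poincar\'e neighborhoods you are pulling back. In that regime the statement ``the inverse of $z\mapsto z^d$ pulls $D_\theta(J)$ back into $D_{\theta/d}(J')$'' simply does not apply: the branch point of $z\mapsto z^{1/d}$ lies strictly inside $D_\theta(J)$, the preimage is not a Poincar\'e neighborhood of a single interval, and no one-sided angle bound is available. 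This is precisely the multicritical difficulty that the paper's Lemma~\ref{lemmaineqseverallevels} is built to resolve, and the resolution is not an angle estimate at all: the paper shows that passing near an interior critical value actually \emph{decreases} the relative distance by a $d_1$-th root, with the degenerate case (the critical value at a partition endpoint) corresponding to a renormalization fixed point of higher criticality $d\cdot d_1$, as noted in the footnote to the proof. Your proposal contains no mechanism for this case, which is where essentially all of the new work in the bounded-type multicritical setting lies.

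Second, the diffeomorphism step. ``Beau distortion'' in Theorem~\ref{teobeau} is a statement about derivative ratios on the real interval; it says nothing about the extent of the domain on which the complex extension of $\psi_j^{-1}$ is univalent, which is exactly the hypothesis required for Sullivan's almost Schwarz inclusion. The fact that the relevant inverse branches of $f$ do extend univalently over Poincar\'e neighborhoods, with a loss factor $K_n\to 1$, is itself a nontrivial theorem (Lemma~\ref{lemmadifeo}, quoted from \cite{Yam2019}), and its proof again requires tracking how close the backward orbit comes to $\crit(f)$, i.e.\ exactly the bookkeeping your bounded-composition picture elides. So your use of the almost Schwarz inclusion for the $\psi_j$ factors is circular unless you supply the extension estimate independently, at which point you are back to the paper's argument.
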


\noindent
Below we will give a proof of this theorem which generalizes the proof in \cite{Yam1999} and strengthens the proof in \cite{Yam2019}. For simplicity of notation, we will assume that the pair $\zeta$ is a pre-renormalization of a multicritical circle map $f$, that is
$$\zeta=(f^{q_n},f^{q_{n+1}}).$$
This will allow us to write explicit formulas for long compositions of terms $\eta$ and $\xi$, which will greatly streamline the exposition.
Theorem~\ref{thm:bounds} follows from Theorem~\ref{maintheorem} which we state below.
Let us further assume that $f$ has $N$ critical points, namely $c_0, c_1, \dots, c_{N-1}$, and that the critical point $c_0=0$ (which is the one at which we renormalize) has criticality equal to $d\in 2\NN+1$. Let $U$ be a $\TT$-symmetric annulus contained in the domain of analicity of $f$. We consider the dynamical partitions associated to the critical point $0$, and denote the fundamental domains of the $n-th$ partition by $I_n$ and $I_{n+1}$. Moreover, the $n-th$ renormalization of $f$ at $0$ is denoted by $\mathcal{R}^nf$.

Our main goal in this section is to prove the following result

\begin{theorem}\label{maintheorem}
 There exist universal positive constants $r_0,b,c$ such that the following holds. Let $f$ be as above and $r\geq r_0$. There exists $m_0=m_0(f,r) \in \nt$ such that for all $n\geq m_0$, if we denote by $\mathcal{R}^nf=(\eta,\xi)$ then there exists subdomains $V_{\eta} \supset I_{\eta}$, $V_{\xi} \supset I_{\xi}$ containing the origin, such that the maps $\eta, \xi$ are branched covering $V_{\eta} \to \{z \in \C: |z|<r \} \cap \C_{\eta(I_{\eta})}$ and $V_{\xi} \to \{z \in \C: |z|<r \} \cap \C_{\eta(I_{\xi})}$. Moreover, for each $z \in V_{\eta}\cap V_{\xi}$ we have
 \[
  |\mathcal{R}^{n}f(z)|\geq c|z|^{d} +b,
 \]
  where the left-hand side stands for $\eta$ on $V_{\eta}$ and $\xi$ on $V_{\xi}$. Finally, the constant $m_0$ can be chosen to be depending only on $r$ in any pre-compact family of maps in the compact-open topology on the annulus $U$.
\end{theorem}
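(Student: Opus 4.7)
The plan is to reduce the theorem to a complex extension of the decomposition furnished by the Real Bounds (Theorem~\ref{teobeau}). By assumption $\mathcal{R}^n f = (\eta,\xi)$ with $\eta = f^{q_{n+1}}|_{I_n}$ and $\xi = f^{q_n}|_{I_{n+1}}$ (after affine rescaling). Theorem~\ref{teobeau} gives
\[
\eta = \psi_{m+1}\circ p_m\circ\psi_m\circ\cdots\circ\psi_1\circ p_0\circ\psi_0,
\]
with $m\le N+1$, each $p_j(w)=w^{d_j}$ a power map centered at a critical point, and each $\psi_j$ an interval diffeomorphism of \emph{beau} distortion. An analogous decomposition holds for $\xi$. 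Corollary~\ref{corollarybeau} ensures that the intermediate real intervals appearing in this decomposition are pairwise commensurable with universal constants, which will be the geometric glue of the argument.

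The first step is to extend each $\psi_j$ to a complex neighborhood. Because $\psi_j$ has beau distortion on an interval $J_j$ and $f$ is analytic on a fixed annulus $U$, for $n$ sufficiently large (depending on $r$ and the compact family the map lives in, which yields $m_0=m_0(f,r)$) the relevant domains are deep inside $U$; using the Schwarz lemma on the inverse $\psi_j^{-1}$ extended to $\C_{\psi_j(J_j)}$ gives a univalent extension
\[
\psi_j : D_{\theta}(J_j)\longrightarrow D_{\theta'}(\psi_j(J_j)),\qquad \theta'\ge \theta-\delta,
\]
with $\delta$ as small as desired once the distortion is controlled. The second step is to analyze the power maps. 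A Poincaré neighborhood $D_\theta(p_j(J_j))$ of an image interval, pulled back by $p_j(w)=w^{d_j}$ when the critical point sits at an endpoint of $J_j$, contains a Poincaré neighborhood $D_{\theta''}(J_j)$ with $\theta''\ge \theta$ strictly; this is the standard gain of angle under $d_j$-th root extraction and is where the complex depth is created. Chaining the at most $2(N+1)$ steps in the decomposition of $\eta$ (and similarly for $\xi$), the total angle loss $(N+1)\delta$ from the diffeomorphic pieces is absorbed by the angle gain from even a single power map, so the composition extends univalently over a Poincaré-type neighborhood of $I_\eta$ (resp.\ $I_\xi$) covering the Euclidean disk of radius $r$ about $0$ on each side. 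Writing the composition and exploiting the fact that $\psi_0$ is a diffeomorphism with derivatives bounded above and below at $0$, the leading term near the origin is of the form $c\,z^d$ for the highest-order critical point involved, while the commuting pair structure forces $|\eta(0)|=1$ and $|\xi(0)|\asymp 1$; together these furnish the estimate $|\mathcal{R}^n f(z)|\ge c|z|^d + b$ on the overlap.

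The main obstacle, and what distinguishes the bounded type case from the quadratic irrational case of \cite{Yam2019}, is that the combinatorics $a_{n+1}=\chi(\mathcal{R}^n\zeta)\le B$ can vary with $n$, so there is no fixed renormalization combinatorics and one cannot rely on a compactness/limit argument for a single renormalization operator. Instead, one must propagate the complex extensions inductively: the argument above is carried out not just for $f^{q_n}$ and $f^{q_{n+1}}$ but for all intermediate iterates $\xi^k\circ\eta$ with $1\le k\le a_{n+1}\le B$ that enter the construction of $\mathcal{R}^{n+1}f$, and the uniform bound $B$ on the number of such compositions is precisely what keeps the accumulated angle losses bounded. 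The delicate bookkeeping, therefore, is to check that the beau constant from Corollary~\ref{corollarybeau} and the bounded-type constant $B$ combine to give a single lower bound $\mu=\mu(L,B)>0$ on the Poincaré angles, independently of $n$; this is the technical heart of the proof, and it is where the quantitative form of Theorem~\ref{maintheorem} (with explicit $c$ and $b$) is extracted and then converted into the qualitative statement of Theorem~\ref{thm:bounds}.
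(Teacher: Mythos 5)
Your proposal takes a genuinely different route from the paper. The paper reduces Theorem~\ref{maintheorem} to Lemma~\ref{mainlemma}, a backward-orbit displacement estimate: one pulls a point $z$ back through the inverse orbit $z_0,z_{-1},\dots,z_{-(q_{n+1}-1)}$ along the intervals \eqref{J-orbit}, and shows via Lemmas~\ref{lemmadifeo}--\ref{lemmaineqinductionstep} that the relative displacement $\dist(z_{-k},J_{-k})/|J_{-k}|$ remains uniformly bounded; the final inverse iterate $f^{-1}$ then extracts the $d$-th root at $c_0=0$ and yields the contraction. You propose instead to take the forward factorization $\eta=\psi_{m+1}\circ p_m\circ\cdots\circ p_0\circ\psi_0$ from Theorem~\ref{teobeau}, extend each diffeomorphic factor $\psi_j$ to Poincar\'e neighborhoods with a controllably small angle loss, and use the gain under the root extractions $p_j$ to absorb the accumulated losses. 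That is a recognizable alternative strategy in the spirit of unicritical complex-bounds arguments, and the two schemes are not inconsistent; both lean on the beau commensurability of Corollary~\ref{corollarybeau}.

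The essential gap is in your treatment of the power maps. You state the angle gain ``when the critical point sits at an endpoint of $J_j$.'' That is correct for the renormalization critical point $c_0=0$, which is always an endpoint of the fundamental intervals. It is precisely \emph{false} for the remaining critical points $c_1,\dots,c_{N-1}$ of a multicritical map: their orbits pass through the \emph{interior} of the intervals in the orbit of $I_n$, so the corresponding critical value $\beta=f^{\ell}(c_1)$ of $f^{q_{m+1}}$ lies strictly inside $f^{q_{m+1}}(I_m)$. For an interior critical point, $p_j$ does not even map the slit plane $\C_{J_j}$ into $\C_{p_j(J_j)}$ (non-real points can have real $d_j$-th powers), so the Schwarz-lemma picture underlying your angle bookkeeping does not apply, and a case analysis becomes unavoidable. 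Handling this --- splitting on whether the pulled-back point jumps off the real line near $\beta$ and on where $\beta$ sits in $\mathcal{P}_{m+1}$ (the subcases a)--d) and the separate case $\Delta=I_{m+2}$) --- is exactly the content of the paper's Lemma~\ref{lemmaineqseverallevels}, and the paper explicitly flags that lemma as the new technical point relative to~\cite{Yam2019}. Your proposal does not engage with interior critical points at all, so as written the argument does not close.

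A secondary remark: your diagnosis of what separates bounded type from the quadratic-irrational case of~\cite{Yam2019} (``no fixed combinatorics, hence no compactness argument'') does not reflect where the paper actually works. The backward-orbit scheme is unchanged from~\cite{Yam1999} and~\cite{Yam2019}; what is new is the finer combinatorial control of the position of $\beta$ relative to the next partition in Lemma~\ref{lemmaineqseverallevels}, with the bounded-type hypothesis feeding in through the number of returns in Lemma~\ref{lemmaJ-orbitcontained} and the uniformity of the beau constants, not through a periodicity-versus-compactness dichotomy.
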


Theorem \ref{maintheorem} is telling us that inverse branches of deep renormalizations, around the critical point $0$, behave as roots of degree $d$. Hence, these inverse branches map a large disk (containing $0$) well within itself, and therefore the modulus of the annuli between the disk and its image is bigger than a certain positive constant.

Following \cite{Yam1999}, Theorem \ref{maintheorem} will clearly follows from Lemma \ref{mainlemma} below.

\begin{lemma}\label{mainlemma}
There exist constants $B_1,B_2$ and $M>0$ such that for any $n\geq M$ the inverse branch $f^{-{q_{n+1}+1}}$ is well defined and univalent over $$\Omega_{n,M}=(D_{n-M}\setminus \R) \cup f^{q_{n+1}}(I_n),$$ and for any $z \in \Omega_{n,M}$ we have the following
\begin{equation}\label{ineqmainlemma}
 \frac{\dist(z_{-(q_{n+1}-1)}, f(I_n))}{|f(I_n)|} \leq B_1 \, \frac{\dist(z,I_n)}{|I_n|} + B_2.
\end{equation} 
\end{lemma}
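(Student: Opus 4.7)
The plan is to construct the inverse branch $f^{-(q_{n+1}-1)}$ inductively, pulling back $\Omega_{n,M}$ one step at a time along the orbit $f^{q_{n+1}}(I_n), f^{q_{n+1}-1}(I_n), \ldots, f(I_n)$. Because the intervals $\{f^k(I_n): 0\le k\le q_{n+1}-1\}$ form (essentially) the long arm of the $n$-th dynamical partition, each critical point $c_j$ of $f$ with $j\ge 1$ belongs to at most one of them; the critical point $c_0=0$ lies in $I_n$ itself, which we never reach because we stop one step early. This splits the full pullback into at most $N$ \emph{diffeomorphic cascades} separated by at most $N-1$ \emph{critical passages}. Univalence of the eventual composition will follow because at every stage the pullback stays inside the slit plane $\C_{J}$ over the current interval $J$, on which a single analytic branch of $f^{-1}$ is well defined.

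\medskip

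On each diffeomorphic cascade the restriction of an appropriate iterate $f^m$ to the initial interval is a diffeomorphism with beau distortion, by Theorem~\ref{teobeau} and Corollary~\ref{corollarybeau}, the bounded-type hypothesis $a_i\le B$ keeping $m$ uniformly bounded. The corresponding inverse branch $\phi:\C_{J'}\to\C_{J}$ is univalent and, by the Schwarz--Pick lemma, weakly contracts the hyperbolic metric of the slit plane. Converting hyperbolic to Euclidean distances via equation~(\ref{eq:diamD}) and invoking commensurability of adjacent partition intervals then yields the relative estimate
\[
  \frac{\dist(\phi(z),J)}{|J|} \;\le\; C\,\frac{\dist(z,J')}{|J'|} + C
\]
with a beau constant $C$ depending only on $B$ and on the commensurability bound.

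\medskip

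The main obstacle is the critical passages. Near $c_j$ the map $f$ factors through $w\mapsto w^{d_j}$, so the local inverse branch is essentially a $d_j$-th root. The pullback of a Poincar\'e neighborhood $D_\theta(J')$ under such a power root is not itself a Poincar\'e neighborhood; however, a relative Schwarz-lemma argument (as developed in the unicritical setting in \cite{Yam1999} and for quadratic irrationals in \cite{Yam2019}) shows that it is contained in $D_{\theta'}(J)$ over the preimage interval $J$, where $\theta'$ is bounded away from $0$ uniformly in $\theta\in(0,\pi)$ and in $d_j\le L$. Consequently each critical passage multiplies the constant in the normalized-distance inequality by a factor depending only on $L$, and adds a bounded term governed by the diameter of $J$ relative to $|f(I_n)|$, itself controlled by the real bounds. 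Composing at most $N$ diffeomorphic cascades and at most $N-1$ critical passages yields uniform constants $B_1,B_2$ depending only on $N$, $L$, and $B$; the threshold $M$ is chosen so that, past it, all dynamical intervals in play are small enough for the local non-flat model of each $c_j$ to govern the pullback on the whole Poincar\'e neighborhood under consideration. This is the delicate step, since one must simultaneously verify that the pullback domains stay disjoint from the other critical points, so that the cascade decomposition remains valid throughout the induction.
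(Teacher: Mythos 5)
Your decomposition into ``diffeomorphic cascades separated by critical passages'' has the right ingredients (Schwarz--Pick for long diffeomorphisms, root-map geometry at critical points) but it is not the argument the paper uses, and as stated it has a genuine gap: it misses the \emph{multi-scale} structure on which the whole proof rests.

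First, a factual error: you assert that the bounded-type hypothesis $a_i\le B$ keeps the cascade length $m$ uniformly bounded. This is false. Between two critical passages the cascade can be as long as a positive fraction of $q_{n+1}$, and this grows without bound in $n$. What saves the diffeomorphic portion is not a bound on $m$ but Lemma~\ref{lemmadifeo}, whose Schwarz-type contraction $D_\theta\mapsto D_{\theta/K_n}$ with $K_n\to 1$ is uniform over arbitrarily long diffeomorphic pieces --- and you do implicitly invoke this, so this error is not fatal by itself. A second inaccuracy: you claim that pulling back a Poincar\'e neighborhood through a critical point yields $D_{\theta'}(J)$ with $\theta'$ bounded away from $0$ uniformly in $\theta\in(0,\pi)$. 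That is not true: a $d_j$-th root sends an angle $\theta$ roughly to $\theta^{1/d_j}$, which still tends to $0$ as $\theta\to 0$. The correct statement is that the root map \emph{improves} the relative-distance estimate (since $x\mapsto x^{1/d}$ is concave), which is what is needed, but not what you wrote.

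The more serious problem is structural. The paper's proof does not do a one-pass estimate along the backward orbit; it fixes the largest scale $m$ with $z\in D_m$ and runs an induction on $m$: across the block of returns to $I_m$ (Lemma~\ref{lemmaineqseverallevels}) the point either ``jumps'' to a definite angle $\veps_2$ at a commensurable distance (at which point Lemma~\ref{lemmaineqonelevel} closes the estimate) or it remains inside $D_m$; then the first return to $I_{m+1}$ (Lemma~\ref{lemmaineqinductionstep}) either produces a definite angle or places the point in $D_{m+1}$, and the induction advances to scale $m+1$. This scale-by-scale bookkeeping is indispensable because the critical passage analysis depends delicately on \emph{where} the critical value $\beta=f^\ell(c_1)$ sits relative to the level-$m$ dynamical partition: whether $\Delta=I_{m+2}$, whether $\Delta$ is interior, whether the point $\zeta$ is near $\beta$ or not. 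These case distinctions, worked out in Lemma~\ref{lemmaineqseverallevels} (cases a)--d)) and Lemma~\ref{lemmaineqinductionstep}, are exactly where the bounded-type assumption is used and where the present paper goes beyond \cite{Yam2019}. Your sketch acknowledges in its final sentence that ``one must simultaneously verify that the pullback domains stay disjoint from the other critical points'' and calls it the delicate step, but provides no argument for it; that step \emph{is} the content of Lemmas~\ref{lemmaineqseverallevels} and \ref{lemmaineqinductionstep} and cannot be dismissed as a verification. Without it, composing ``at most $N$ cascades and $N-1$ passages'' does not yield uniform constants: the constant coming out of a critical passage depends on how the angle $\widehat{(z,J)}$ degenerated during the preceding cascade across multiple partition scales, and controlling that degeneration is precisely the multi-scale induction you have skipped.
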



\subsection{Proof of Lemma \ref{mainlemma}}
In this subsection we give a proof of Lemma \ref{mainlemma} based in the proof of \cite[Lemma 4.2]{Yam2019}. Let us introduce some notation. Let $M>n_0$, where $n_0$ is given by Corollary \ref{corollarybeau} and let $n \geq M$. We define $H_n$ to be the interval 
\begin{equation*}
 H_n= [f^{q_{n+1}}(0), f^{q_{n}-q_{n+1}}(0)],
\end{equation*}
and $D_n$ the Euclidean disk whose intersection with the real line is the interval $H_n$. Note that, by Corollary \ref{corollarybeau}, $\diam(D_n)=|H_n|\asymp |I_n|$. Also, consider the inverse orbit:
\begin{equation}\label{J-orbit}
J_0 = f^{q_{n+1}}(I_n), \ J_{-1}= f^{q_{n+1}-1}(I_n), \cdots , J_{-(q_{n+1}-1)} = f(I_n). 
\end{equation}
For any point $z \in D_M$, we say that
\begin{equation}\label{z-orbit}
 z_0 = z, z_{-1} , \cdots , z_{-(q_{n+1} -1)}
\end{equation}
is a corresponding inverse orbit if each $z_{-(k+1)}$ is obtained by applying to $z_{-k}$ a univalent
inverse branch of $f|_W$, where $W$ is a sub-interval of $J_{-(k+1)}$.

We will need four lemmas, the first one is the following result from \cite{Yam2019}.

\begin{lemma}\label{lemmadifeo} For each $n \geq1$ there exist $K_n \geq1$ and $\theta_n >0$ with $K_n \to 1$ and $\theta_n\to 0$ as $n \to \infty$ such that the following holds. Let $\theta \geq \theta_n$, and let  $0 \leq i < j \leq q_{n+1}$ be such that the restriction $f^{j-i} : f^{i}(I_n) \to f^{j}(I_n)$  is a diffeomorphism on the interior. Then the inverse branch $f^{-(j-i)}|_{f^{j}(I_n)}$ is well-defined
over $D_{\theta}(f^{j}(I_n))$ and maps it univalently into the Poincaré neighborhood $D_{\theta/K_n}(f^{i}(I_n))$.
\end{lemma}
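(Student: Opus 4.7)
The plan is to extend $f^{j-i}$ to a univalent map on a complex neighborhood of $J_0 := f^i(I_n)$ whose image strictly contains the Poincar\'e neighborhood $D_\theta(f^j(I_n))$, and then invoke the Schwarz--Pick lemma on the inverse branch to get the angle-contraction bound. Throughout, write $J_k := f^{i+k}(I_n)$ for $0 \leq k \leq j-i$.

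First, I would establish that $f$ admits a univalent extension to a wide complex neighborhood of each $J_k$, with the relative width (compared to $|J_k|$) tending to infinity as $n \to \infty$. By the hypothesis that $f^{j-i}$ is a diffeomorphism on the interior of $J_0$, each $J_k$ with $k < j-i$ contains no critical point of $f$ in its interior, so $f|_{J_k}$ is a local diffeomorphism and extends univalently to a complex neighborhood of $J_k$. Corollary \ref{corollarybeau} (real bounds) gives $|J_k| \asymp |I_n|$ uniformly, and the pre-compactness of the family together with analyticity of $f$ on a fixed annulus $U$ imply that one can choose $\vartheta_n > 0$ with $\vartheta_n \to 0$ as $n \to \infty$ such that $f$ extends univalently on $D_{\vartheta_n}(J_k)$, with image strictly containing $D_{\vartheta_n}(J_{k+1})$, uniformly in $k$.

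Composing these extensions step by step, the full iterate $F := f^{j-i}$ extends to a univalent map on some complex domain $V_0 \supset J_0$ with image $V_{j-i} \supset D_{\vartheta_n}(J_{j-i})$. Setting $\theta_n := 2\vartheta_n$ (say) and assuming $\theta \geq \theta_n$ gives $V_{j-i} \supset D_\theta(J_{j-i})$, so the inverse branch $F^{-1}$ is well-defined and univalent on $D_\theta(J_{j-i})$. To obtain the Poincar\'e-radius bound, I would apply the Schwarz--Pick lemma to $F^{-1} : D_\theta(J_{j-i}) \hookrightarrow \CC_{J_0}$, which is a holomorphic embedding sending the real interval $J_{j-i}$ to $J_0$. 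Because the extra room $V_{j-i} \setminus D_\theta(J_{j-i})$ is a definite (in fact, large) hyperbolic annulus around $D_\theta(J_{j-i})$, the hyperbolic comparison yields $F^{-1}(D_\theta(J_{j-i})) \subset D_{\theta'}(J_0)$ with $\theta/\theta' \to 1$ as $\vartheta_n/\theta \to 0$; setting $K_n := \theta/\theta'$ gives $K_n \to 1$.

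The main obstacle is that up to $q_{n+1}$ iterations may be involved, so a naive inductive bound of the form $\theta_{k+1} = \theta_k(1 - \varepsilon_{n,k})$ would accumulate multiplicatively and need not yield $K_n \to 1$. The resolution is to avoid iterating the Schwarz--Pick estimate: one instead applies it \emph{only once}, to the full composition $F$, exploiting that the contraction of $F^{-1}$ is captured by the hyperbolic geometry of the single pair $(D_\theta(J_{j-i}), V_0)$. The real bounds enter crucially at the level of the stepwise construction of $V_0$ (to ensure enough room in each intermediate domain $D_{\vartheta_n}(J_k)$), rather than at the level of the final hyperbolic estimate, which is what allows the $K_n$ produced by the single Schwarz--Pick application to depend only on $n$ and not on $j-i$.
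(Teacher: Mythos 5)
The paper does not prove this lemma: it is cited verbatim from \cite{Yam2019}, where it is established (following \cite{dFdM2} and \cite{Yam1999}) by iterating a local ``almost Schwarz inclusion'' estimate whose per-step error is controlled well enough that the accumulated loss over all $q_{n+1}$ steps still tends to $0$ as $n\to\infty$. Your proposal tries to sidestep the iteration and apply Schwarz--Pick once to the full composition. That route has two genuine gaps.

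First, the stepwise assertion in your second paragraph --- that one can choose $\vartheta_n\to 0$ so that $f$ is univalent on $D_{\vartheta_n}(J_k)$ \emph{with image strictly containing} $D_{\vartheta_n}(J_{k+1})$, uniformly in $k$ --- is precisely the one-step case of the lemma you are trying to prove (indeed a stronger version, with $K_n=1$), and no argument is offered for it. An analytic diffeomorphism $f:J_k\to J_{k+1}$ is close to its linearization when $|J_k|$ is small, but a conformal perturbation of a linear map does \emph{not} in general send $D_\theta(J_k)$ onto a region containing $D_\theta(J_{k+1})$; the perturbed image can fall inside $D_\theta(J_{k+1})$ near parts of the boundary, which is exactly why one must accept an angle loss $\theta\mapsto\theta'>\theta$ at each step. (Relatedly, Corollary~\ref{corollarybeau} only gives commensurability of \emph{adjacent} intervals of $\mathcal P_n$; it does not directly give $|J_k|\asymp|I_n|$ uniformly in $k$, though a weaker bound $\max_k|J_k|\to 0$ does suffice for the extension claim.)

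Second, and more fundamentally, the ``single Schwarz--Pick'' step does not deliver $K_n\to1$ with your choice $\theta_n=2\vartheta_n$. Your own criterion for obtaining $\theta/\theta'\to1$ is that $\vartheta_n/\theta\to0$. But the lemma must hold for all $\theta\geq\theta_n$, and at the worst case $\theta=\theta_n=2\vartheta_n$ the ratio $\vartheta_n/\theta$ equals $1/2$ for every $n$, so it does not tend to $0$. Concretely, the ``extra room'' between $D_\theta(J_{j-i})$ and $V_{j-i}\supset D_{\vartheta_n}(J_{j-i})$, measured hyperbolically in $\mathbb C_{J_{j-i}}$, is at most $r(\vartheta_n)-r(2\vartheta_n)=\log\bigl(\cot(\vartheta_n/4)/\cot(\vartheta_n/2)\bigr)\to\log 2$; the annulus has bounded (not large) modulus, so the hyperbolic-metric comparison only gives $K_n$ bounded away from $1$, not $K_n\to1$. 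Decoupling $\theta_n$ from $\vartheta_n$ (say $\theta_n=C_n\vartheta_n$ with $C_n\to\infty$, $C_n\vartheta_n\to0$) would repair the second issue, but the first one remains: you still need a quantitative one-step pullback estimate that, when composed over the whole orbit of $I_n$, degrades by a factor tending to $1$ --- which is exactly the content of the iterative argument the proposal set out to avoid.
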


Let $J=[a,b]$, for a point $z \in \overline{\C}_{J}$ we define the angle between $z$ and $J$, which is denoted by $\widehat{(z,J)}$, as the least of the angles between the intervals $[a,z],[b, z]$ and the corresponding rays $(a, -\infty), [b, +\infty)$ of the real line, measured in the range $0\leq \theta \leq \pi$.

Next result is the same as \cite[Lemma 4.7]{Yam2019}. We provide a more detailed proof for reader's convenience.

\begin{lemma}\label{lemmaineqonelevel}
Fix $n \geq M $, $\varepsilon_1>0$ and $B>0$. Let $0<i<k<q_{n+1}$ and consider two intervals of the inverse orbit given in \ref{J-orbit}, namely $J=J_{-i}$ and $J'=J_{-k}$. Let $z,z'$ be the corresponding points of the orbit \ref{z-orbit}. Assume that $\widehat{(z,J)}\geq \varepsilon_1$ and $\dist(z,J) \leq B\, |J|$. Then
 \begin{equation*}
  \frac{\dist(z',J')}{|J'|} \leq C\, \frac{\dist(z,J)}{|J|},
 \end{equation*}
for some constant $C=C(\varepsilon_1,B)>0$.
\end{lemma}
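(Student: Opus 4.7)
The plan is to combine Lemma~\ref{lemmadifeo} with the Koebe distortion theorem. Since $0<i<k<q_{n+1}$, the iterate $f^{k-i}\colon J'\to J$ is a diffeomorphism on the interior, and hence admits a univalent holomorphic inverse branch $g:=(f^{k-i})^{-1}$ with $g(J)=J'$. The first step is to take $M$ large enough that $\theta_n<\varepsilon_1/2$ for all $n\geq M$; Lemma~\ref{lemmadifeo} then extends $g$ univalently to the Poincar\'e neighborhood $D_{\varepsilon_1/2}(J)$. This neighborhood strictly contains the set
\[
K \;=\; \bigl\{w\in\C_J : \widehat{(w,J)}\geq \varepsilon_1 \text{ and } \dist(w,J)\leq B|J|\bigr\},
\]
which is exactly where the hypothesis places $z$, and after rescaling to $|J|=1$ the set $K$ is compactly contained in $D_{\varepsilon_1/2}(J)$.

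The second step is to extract a Lipschitz-type estimate for $g$ on $K$. Since $g$ is univalent on the simply connected domain $D_{\varepsilon_1/2}(J)$ and $K$ sits at bounded hyperbolic distance therein, the Koebe distortion theorem produces a constant $C_0=C_0(\varepsilon_1,B)$ with $|g'(w_1)|/|g'(w_2)|\leq C_0$ for all $w_1,w_2\in K$. Since $J\subset K$ and $|J'|=\int_J |g'|\,ds$, at least one point of $J$ satisfies $|g'|\leq |J'|/|J|$, so combined with the distortion bound this yields
\[
|g'(w)|\;\leq\; C_1(\varepsilon_1,B)\,\frac{|J'|}{|J|}\qquad \text{for all } w\in K.
\]

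To finish, I would bound $\dist(z',J')$ by integrating $|g'|$ along a path from $z$ to its nearest point $p\in J$ of length comparable to $\dist(z,J)$. The natural candidate is the straight segment $[z,p]$, which lies inside $D_{\varepsilon_1}(J)$ because each half of that Poincar\'e neighborhood is a circular segment that is star-shaped with respect to every point of $J$; moreover a direct angle calculation shows that $\widehat{(w,J)}$ is non-decreasing as $w$ travels along $[z,p]$ toward $p$, so $[z,p]\subset K$. Hence
\[
\dist(z',J') \;\leq\; |g(z)-g(p)|\;\leq\; \int_{[z,p]}|g'|\,ds \;\leq\; C_1(\varepsilon_1,B)\,\frac{|J'|}{|J|}\,\dist(z,J),
\]
and dividing by $|J'|$ gives the lemma with $C=C_1(\varepsilon_1,B)$.

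The main technical hurdle I expect is the last geometric step: confining the path from $z$ to $J$ inside the set where Koebe yields a uniform bound, while keeping the path length of order $\dist(z,J)$ (with no additive $|J|$ term, otherwise the estimate degenerates as $\dist(z,J)\to 0$). If the straight segment turns out to be awkward when $p$ is an endpoint of $J$, the alternative is to replace $[z,p]$ by the hyperbolic geodesic joining $z$ to $J$ inside $D_{\varepsilon_1/2}(J)$; on the bounded region $K$ the hyperbolic and Euclidean metrics are comparable, so such a geodesic still has Euclidean length $O(\dist(z,J))$ and the argument goes through unchanged.
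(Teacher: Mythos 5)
The central gap is at the very first step: you assert that because $0<i<k<q_{n+1}$, the iterate $f^{k-i}\colon J'\to J$ is a diffeomorphism on the interior, and hence has a single univalent holomorphic inverse branch over a Poincar\'e neighborhood of $J$. That is true for a map with a single critical point (it is essentially \cite[Lemma~2.1]{Yam1999}), but it fails in the multicritical setting of this paper. The intervals $J_{-j}=f^{q_{n+1}-j}(I_n)$ are partition elements, and the renormalization critical point $0$ is always an endpoint of these, but the remaining critical points $c_1,\dots,c_{N-1}$ of $f$ can and do lie in the \emph{interiors} of partition intervals. So $f^{k-i}|_{J'}$ may well have a critical point, and your map $g$ does not exist as a single univalent branch. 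This is exactly the new difficulty of the multicritical case, and addressing it is the whole point of this lemma.

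The paper's proof begins by noting that there is at most one critical point of $f^{k-i}$ in $J'$, contained in a partition interval $\Delta\in\mathcal{P}_{n+1}$ with $|\Delta|\asymp|J'|$. It then passes to at most two subintervals $I_1',I_2'\subset J'$, each commensurable with $J'$, on which $f^{k-i}$ really is a diffeomorphism, and applies Lemma~\ref{lemmadifeo} together with \cite[Lemma~2.1]{Yam1999} on whichever of them carries $z'$. Your Steps~2 and~3 (Koebe distortion plus integrating $|g'|$ along a path inside a fixed Poincar\'e neighborhood) are the same flavor of argument and would be fine if $g$ existed globally on $J'$, but they need to be run on $I_j'$ with the commensurability $|I_j'|\asymp|J'|$ doing the final bookkeeping. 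Without the decomposition, the proof does not go through.

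Two smaller inaccuracies: (i) the set $K=\{\widehat{(w,J)}\ge\varepsilon_1,\ \dist(w,J)\le B|J|\}$ need not be contained in $D_{\varepsilon_1/2}(J)$; the correct Poincar\'e angle depends on both $\varepsilon_1$ and $B$, since a point at distance $B|J|$ with angle near $\pi/2$ escapes $D_{\varepsilon_1/2}(J)$ once $B$ is large. This is harmless because $C$ is allowed to depend on $B$, but it should be stated. (ii) Taking $M$ large enough so that $\theta_n$ is below the needed threshold is fine, but that threshold again depends on $B$, not only on $\varepsilon_1$.
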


\begin{proof}[Proof of Lemma \ref{lemmaineqonelevel}]
  Since the orbit $\{J_{-i}\}_{0<i\leq q_{n+1}-1}$ forms part of the dynamical partition $\mathcal{P}_n(f)$, then there are at most one critical point for the iterate $f^{k-i}|_{J'}$. That critical point belongs to the interior or to the boundary of some interval $\Delta$ of the next dynamical partition $\mathcal{P}_{n+1}(f)$ in $J'$. Note that $|\Delta|\asymp |J'|$, see \cite[Lemma $4.2$]{EdF} and \cite[Proposition $4.1$]{EdF}. Therefore, there exists at most two intervals $I'_1,I'_2\subset J'$ comparable with $J'$ and such that $f^{k-i}: I'_j \to I_j$, for $j=1,2$, is a diffeomorphism. Let $D_{\theta_j}(I_j)$ be the smallest closed hyperbolic neighborhood enclosing $z$, for $j=1,2$. Observe that $\theta_j=\theta_j(\varepsilon_1,B)$, $ \diam(D_{\theta_j}(I_j)) \asymp \diam(D_{\theta_j}(J))$ and that there exists a constant $\tilde{C}_j=\tilde{C}_j(\varepsilon_1,B)>0$ such that $\diam(D_{\theta_j}(I_j)) \leq \tilde{C}_j \, \dist(z,I_j)$, see \cite[Lemma 2.1]{Yam1999}. By Lemma \ref{lemmadifeo} there exists $K_n>1$ such that $f^{-(k-i)}(D_{\theta_j}(I_j)) \subseteq  D_{\theta_j/K_n}(I'_j)$. Then for some $j\in \{1,2\}$
 {\small
 \begin{equation*}
 \dfrac{\dist(z', J')}{|J'|} \leq 
 \dfrac{C_1\, \diam(D_{\theta_j/K_n}(I'_j))}{|I'_j|} \leq
 \dfrac{C_2\, \diam ( f^{-(k-i)}(D_{\theta_j}(I_j)))}{|I'_j|} 
 \end{equation*}
 }
 where the constants $C_1,C_2$ are beau. The lemma follows since
 {\small
 \begin{equation*} 
 \dfrac{C_2\,  \diam ( f^{-(k-i)}(D_{\theta_j}(I_j)))}{|I'_j|}
 \leq
 \dfrac{C_3\, \diam(D_{\theta_j}(I_j))}{|I_j|}
  \leq \dfrac{C_4 \, \dist(z, J)}{|I_j|}
  \leq \dfrac{C_5\, \dist(z,J)}{|J|},
 \end{equation*}
 }
 where $C_3$ is beau and $C_4, C_5$ depend on $\varepsilon_1$ and $B$.
 \end{proof}

Next result is borrowed from \cite[Page 345, Lemma 2.2]{dFdM2}.

\begin{lemma} \label{lemmaJ-orbitcontained} Let $n > M$ and consider the inverse orbit defined by \ref{J-orbit}. Given $m$ with $n>m \geq M$, let $P_0,  \dots P_{-k}$ be the moments in the backward orbit \ref{J-orbit} of $I_n$ before the first return to $I_{m+1}$ such that $P_{-i}\subseteq I_m$. Then $k=a_{m+1}$, $P_0 \subseteq I_{m+2}$ and
\[
 P_{-i}\subseteq f^{q_m+(a_{m+1}-i)q_{m+1}}(I_{m+1}).
\]
\end{lemma}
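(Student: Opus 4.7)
The statement is a purely combinatorial fact about the dynamical partitions $\mathcal{P}_m$ and $\mathcal{P}_{m+1}$. My plan is to reduce to the rigid rotation $R_\alpha$, spell out the structure of $I_m$ in $\mathcal{P}_{m+1}$, and identify the iterates of $I_n$ landing in $I_m$ during one excursion of the forward orbit of $I_n$.

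By the topological conjugacy $f\sim R_\alpha$ with $\alpha=\rho(f)$ (Yoccoz), the combinatorial position of any iterate $f^j(I_n)$ inside $\mathcal{P}_{m+1}$ is a conjugacy invariant, so I reduce to the rotation model with $c=0$ and $|I_k|=\|q_k\alpha\|$. The identity $|I_m|=a_{m+1}|I_{m+1}|+|I_{m+2}|$ combined with the refinement rule recalled in the preliminaries gives the explicit decomposition, up to endpoints,
\[
 I_m \;=\; I_{m+2}\;\cup\;\bigcup_{j=0}^{a_{m+1}-1}f^{q_m+jq_{m+1}}(I_{m+1}),
\]
arranged so that $I_{m+2}$ is adjacent to $c$, the iterate at $j=a_{m+1}-1$ sits immediately beyond it, and successive $j$ decrease outward, with $j=0$ at the far endpoint $f^{q_m}(c)$ of $I_m$. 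This yields $a_{m+1}+1$ $\mathcal{P}_{m+1}$-pieces inside $I_m$.

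Working in the regime $n>m+1$ (so $I_n$ is a proper sub-interval of $I_{m+1}$, close to $c$), let $t^\star$ denote the first-return time of the forward orbit of $I_n$ to $I_{m+1}$. A direct rotation-model computation shows that the iterates $f^t(I_n)$ fully contained in $I_m$ within the window $(0,t^\star)$ occur exactly at the times $t_\ell=q_m+\ell q_{m+1}$ for $\ell=0,1,\ldots,a_{m+1}$, giving $a_{m+1}+1$ visits; this is because $f^{q_{m+1}}$ acts combinatorially on $I_m$ as translation by $-|I_{m+1}|$, so starting from $f^{q_m}(c)$ at the far endpoint of $I_m$, successive applications of $f^{q_{m+1}}$ carry the iterate through $f^{q_m+q_{m+1}}(I_{m+1}),\ldots,f^{q_m+(a_{m+1}-1)q_{m+1}}(I_{m+1})$ and finally into $I_{m+2}$ at $t_{a_{m+1}}=q_{m+2}$. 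One further application of $f^{q_{m+1}}$ lands in $I_{m+1}$ at $t^\star=q_{m+2}+q_{m+1}$, and the identity $q_{m+2}=a_{m+1}q_{m+1}+q_m$ guarantees that no other visits to $I_m$ occur in the window. Since $\mathcal{P}_n$ refines $\mathcal{P}_{m+1}$, each $f^{t_\ell}(I_n)$ is contained in the $\mathcal{P}_{m+1}$-piece containing its left endpoint $f^{t_\ell}(c)$.

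Re-indexing in backward order $J_{-i}=f^{q_{n+1}-i}(I_n)$, so that $P_0$ corresponds to the latest forward-time visit $t_{a_{m+1}}=q_{m+2}$, yields exactly $P_0\subseteq I_{m+2}$, $P_{-i}\subseteq f^{q_m+(a_{m+1}-i)q_{m+1}}(I_{m+1})$ for $1\le i\le a_{m+1}$, and $k=a_{m+1}$. \emph{The main obstacle} is the combinatorial book-keeping in the middle step: verifying both that the listed $a_{m+1}+1$ visits are the only visits of the orbit of $I_n$ to $I_m$ within the window, and that they appear in the claimed order. Both reduce, under the rotation-model conjugacy, to the Ostrowski-type identity $q_{m+2}=a_{m+1}q_{m+1}+q_m$ and the shift action of $f^{q_{m+1}}$ on $I_m$; parity considerations between $n$ and $m$ are absorbed uniformly by the same argument, once the first-return time $t^\star$ is matched to the correct geometric position of $I_n$ inside $I_{m+1}$.
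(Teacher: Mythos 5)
First, a remark on the comparison: the paper does not give its own proof of this lemma—it is stated verbatim and attributed to de~Faria--de~Melo \cite[Lemma~2.2, p.~345]{dFdM2}—so what you have written is a proof from scratch rather than a reproduction of an argument in the text. Your reduction to the rotation model and your decomposition
\[
 I_m \;=\; I_{m+2}\;\cup\;\bigcup_{j=0}^{a_{m+1}-1}f^{q_m+jq_{m+1}}(I_{m+1}),
\]
with $I_{m+2}$ adjacent to $c$ and $f^{q_m}(I_{m+1})$ at the far end, is correct and is indeed the right combinatorial picture.

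The gap is in the re-indexing step, and it is not cosmetic. The backward orbit \eqref{J-orbit} is anchored at $J_0=f^{q_{n+1}}(I_n)$, so its early terms $J_{-\ell}=f^{q_{n+1}-\ell}(I_n)$ sit at forward times near $q_{n+1}$, and the first return to $I_{m+1}$ happens at $\ell=q_{m+2}$ (the intervening visits to $I_m$ are $J_{-q_{m+1}},J_{-2q_{m+1}},\dots,J_{-a_{m+1}q_{m+1}}$, while $J_0$ itself straddles $c$, with $J_0\cap I_m\subset I_{m+2}$). You instead analysed the head of the forward orbit, times $q_m,q_m+q_{m+1},\dots,q_{m+2}$, and then asserted that ``$P_0$ corresponds to the latest forward-time visit $t_{a_{m+1}}=q_{m+2}$.'' But $f^{q_{m+2}}(I_n)=J_{-(q_{n+1}-q_{m+2})}$, whose backward index $q_{n+1}-q_{m+2}$ is far \emph{beyond} the first return index $q_{m+2}$, so it is not even one of the moments the lemma is talking about; you are tracking the wrong segment of the orbit. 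That your list of atoms nevertheless agrees with the correct one is a consequence of the self-similar, orientation-reversing symmetry of the partition that your argument never invokes, so the coincidence is not established. A second issue is parity: your window analysis assumes $I_n\subset I_{m+1}$, but when $n\equiv m\pmod 2$ one has $I_n\subset I_{m+2}\subset I_m$, the forward orbit starts already inside $I_m$, and the first return of the forward orbit to $I_{m+1}$ occurs at time $q_{m+1}$ rather than $q_{m+2}+q_{m+1}$, so the claim that parity is ``absorbed uniformly'' is false for the forward-orbit picture. The clean way to avoid both problems is to argue directly in backward time from $J_0$ (which is what the cited lemma does): the pullbacks of $J_0\cap I_m\subset I_{m+2}$ by $f^{-q_{m+1}}$ step outward through the decomposition of $I_m$ one atom per step, reaching $f^{q_m}(I_{m+1})$ after $a_{m+1}$ steps and leaving $I_m\cup I_{m+1}$ for $I_{m+1}$ one $f^{-q_m}$ later—this is parity-agnostic and pins down the $\ell$-indices as well as the atoms.
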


\begin{figure}[!ht]
\centering
\psfrag{I}[][][1]{$I_{m+1}$} 
\psfrag{J}[][][1]{$I_{m}$}
\psfrag{K}[][][1]{$I_{m+2}$}
\psfrag{L}[][]{$I_{m+1}^{q_{m+2}-q_{m+1}}$} 
\psfrag{M}[][][1]{$I_{m+1}^{q_m}$}
\psfrag{P}[][][1]{$P_0$}
\psfrag{Q}[][][1]{$P_{-a_{m+1}}$} 
\psfrag{c}[][][1]{$c$} 
\psfrag{...}[][]{$\Large{\dots}$} 
\includegraphics[width=4.5in]{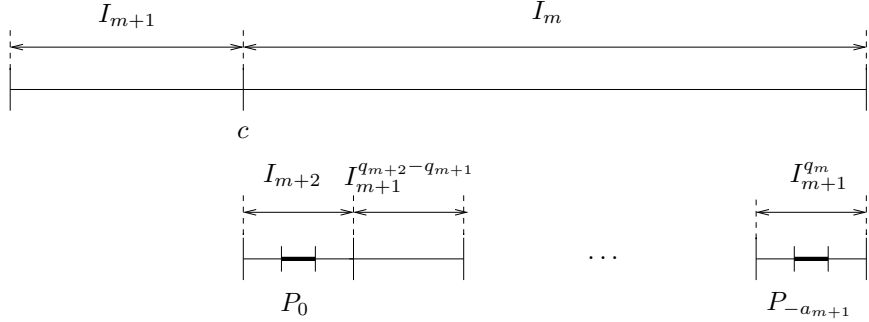}
  \caption{\label{fig:lemmaJ-orbitcontained} The moments in the backward orbit \ref{J-orbit} of $I_n$ before the first return to $I_{m+1}$.}
\end{figure}

Next result and its proof is a version of \cite[Lemma 4.8]{Yam2019}, for bounded type rotation numbers. We remark that this result and its proof is the main difference in the proof of complex {\it a priori} bounds for multicritical circle maps with bounded type rotation number and multicritical circle maps with irrational quadratic rotation number.

Since we are interested in following the orbit \eqref{J-orbit} of $I_n$ by the inverse branch of the map $f^{q_{m+1}}$ in $D_m$, we decompose the inverse branch by a finite composition of diffeomorphisms and inverse images of $f$. 

\begin{lemma}\label{lemmaineqseverallevels}
  There exists $\varepsilon_2>0$ such that for each $n>m$ the following holds. Let $J=J_{-k}$, $J'=J_{-k-q_{m+1}}$ be two consecutive returns of the backward orbit \ref{J-orbit} of $I_n$, before the first return to $I_{m+1}$, and let $\zeta,\zeta'$ be the corresponding points of the orbit \ref{z-orbit}. Suppose that $\zeta \in D_m$, then either $\zeta' \in D_m$ or the following holds. There is $k<s\leq k+q_{m+1}$ such that  $|J_{-s}|>C_0|J|$ and for the point $z_{-s}$ in the orbit \ref{z-orbit} 
  we have $\widehat{(z_{-s},J_{-s})}>\varepsilon_2$,  Moreover, $\widehat{(\zeta',J')}>\varepsilon_2$ and hence $\dist(\zeta', J')<C\,|I_m|$, where $C$ is beau.
  \end{lemma}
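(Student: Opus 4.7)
The plan is to track the inverse orbit $\zeta=\zeta_{-k}\to\zeta_{-k-1}\to\cdots\to\zeta_{-k-q_{m+1}}=\zeta'$ of $f^{-1}$, and to exploit the fact that $f^{q_{m+1}}\colon J'\to J$ admits the factorization of Theorem~\ref{teobeau}: a composition of at most $N+1$ power maps $p_j(x)=x^{d_j}$ alternated with diffeomorphism factors $\psi_j$ of beau distortion. The inverse of such a composition is an alternating sequence of diffeomorphism inverses (governed by Koebe) and root-like inverse branches $z\mapsto z^{1/d_j}$.

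First I would locate a distinguished intermediate interval $J_{-s}$, with $k<s\le k+q_{m+1}$, corresponding to the forward image of $J'$ just after one of the critical passages in the above factorization. Because the rotation number is of type bounded by $B$, so that $a_{m+1}\le B$, Corollary~\ref{corollarybeau} together with Lemma~\ref{lemmaJ-orbitcontained} implies that the intervals appearing between two consecutive visits of the backward orbit of $I_n$ to $I_m$ have lengths commensurable with $|I_m|$, with a commensurability constant depending only on $B$ and the beau real-bounds constant. In particular $|J_{-s}|\asymp|I_m|$ while $|J|\lesssim|I_n|$ lies at a strictly deeper renormalization scale, so the inequality $|J_{-s}|>C_0|J|$ of the lemma holds with a suitable universal $C_0$.

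I would then split the pull-back from $\zeta$ to $\zeta'$ into three stages. Stage A propagates the data at $\zeta$ to the point $\zeta_{-s+1}$ through diffeomorphic inverse branches of $f$; here Lemma~\ref{lemmaineqonelevel} delivers both a distance estimate and the preservation of any angle lower bound on $\widehat{(\zeta,J)}$. Stage B is the single inverse passage through a critical power map $z\mapsto z^{1/d_j}$, in a chart produced by the beau-distortion factors of Theorem~\ref{teobeau}; the geometric effect of the inverse root is to convert a merely positive Euclidean distance from the image interval, relative to its length, into a \emph{definite} angle with the preimage interval. Stage C applies Lemma~\ref{lemmaineqonelevel} once more from $\zeta_{-s}$ back to $\zeta'$, transporting this angle lower bound to $\widehat{(\zeta',J')}>\varepsilon_2$ and the distance estimate $\dist(\zeta',J')\lesssim|J'|\asymp|I_m|$, beau.

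The dichotomy of the lemma then emerges as follows: either $\dist(\zeta,J)/|J|$ is already so small that Stages A--C never leave $D_m$, yielding the first alternative $\zeta'\in D_m$; or Stage B is traversed while the iterate is still off the real axis, which forces the angle opening that drives the second alternative. The main obstacle will be the quantitative control in Stage B: one must show, uniformly in $m$ and in the particular map (depending only on $B$, on the maximal criticality $d$, and on the beau constants of Theorem~\ref{teobeau}), that the inverse of a pure power $z^{d_j}$ pre-composed by a bounded-distortion diffeomorphism turns a point off $\RR$ at definite relative distance from the image interval into one at angle bounded below with the preimage interval. This is a local geometric computation about $z^{1/d_j}$, but it is the step at which uniformity across critical points and across scales has to be verified carefully.
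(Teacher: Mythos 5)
Your overall strategy correctly identifies the root-jump mechanism as the engine of the proof, but there are two genuine gaps. First, your justification of $|J_{-s}|>C_0|J|$ is wrong: you assert $|J_{-s}|\asymp|I_m|$, but $J_{-s}$ lies in the backward orbit of $I_n$, hence is an element of the level-$n$ partition $\cP_n$ and is typically much smaller than $|I_m|$ when $n\gg m$. The correct reason the lemma's inequality holds is that $J_{-s}$ and $J$ are both small iterates of $I_n$ related by an iterate of $f$ whose factors have beau distortion by the real bounds (Theorem~\ref{teobeau}, Corollary~\ref{corollarybeau}), so $|J_{-s}|\asymp|J|$; neither is commensurable with $|I_m|$.

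Second, you describe ``Stage B'' as a single inverse passage through one power map $z\mapsto z^{1/d_j}$, but between two consecutive returns of the backward orbit to $I_m$ the pull-back by $f^{q_{m+1}}$ traverses critical values of $f$ three times: twice through $f(0)$ (for the pull-backs by $f^{q_m-1}$ and $f^{q_{m+1}-1}$) and once through $f(c_1)$ (via $f^{\ell-1}$, where $\beta=f^{\ell}(c_1)$ is the critical value of $f^{q_{m+1}}$ inside $I_m$). After the root at $c_1$ opens the angle, the subsequent passes through $f(0)$ can shrink it again, and the paper must track all three of these in its case split a) to d). Moreover, you omit the preliminary case analysis on the position of $\beta$: whether $\beta$ is at distance $\asymp|I_m|$ from $J$ (reducing to the argument in \cite{Yam2019}) or near $J$, and which element of $\cP_{m+1}$ contains $\beta$ ($I_{m+2}$ or not). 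This determines whether the new complication relative to the single-critical-point setting actually arises, and hence whether your ``Stage B'' is entered while still off the real axis. Without it, the claim of a universal $\varepsilon_2$ does not follow from the local geometry of $z^{1/d_1}$ alone.
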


\begin{proof}[Proof of Lemma \ref{lemmaineqseverallevels}]
  Firstly, replacing $f$ by its renormalization if need be, we can ensure that every element of the dynamical partition of level $\geq M$ contains at most one critical value of $f$.  Now, if the iterate $f^{q_{m+1}}$ does not have a critical point in the interior of  the interval $I_m$, then we are in the situation of \cite[Lemma 4.2]{Yam1999} and the same proof applies. Therefore, let us assume that there are critical points of $f^{q_{m+1}}$ in the interior of $I_m$. For simplicity, let us assume that there is only one such point, otherwise the argument below will need to be repeated at most $N-1$ times (recall that $N$ is the number of critical points of $f$).

  Let us denote by $\beta$ the critical value of $f^{q_{m+1}}$ in the interior of $f^{q_{m+1}}(I_m)$; there exists $\ell<q_{m+1}$ such that $\beta =f^{\ell}(c_1)$, for $c_1\neq 0$ being a critical point of $f$ with criticality $d_1\in2\NN+1$. Below we will distinguish two scenarios. In the first one, the distance between $\beta$  and $J$ is commensurable with the size of $I_m$. Then we are in the case described in \cite[Lemma 4.8]{Yam2019}, and we proceed accordingly.

  In the other case, the pull-back  $J_{-k-(\ell-1)}\mapsto J_{-k-\ell}$ will factor as $\phi\circ s$ where $w\mapsto s(w)$ is a root of degree $d_1$, and $\phi$ is univalent, and $T\equiv \phi(J_{-k-(\ell-1)})$ is near to $0$. If the point $z_{-k-\ell}$ in the orbit \ref{z-orbit} ``jumps'' at some definite angle from the real line, then the relative distance
$\dist(z_{-k-\ell},J_{-k-\ell})/|J_{-k-\ell}|$ will not increase -- it 
will actually {\it decrease} up to a constant by a root of degree $d_1$\footnote{It is helpful to think here what happens in the limiting situation, that is, when $\beta$ is one of the endpoints of $\cP_m$. Then $f$ renormalizes to a commuting pair with a critical point of criticality $d\times d_1$ at $0$ and the estimate in Theorem~\ref{maintheorem} is {\it improved}.}. Hence the argument can be completed using Lemma~\ref{lemmaineqonelevel}.

We proceed with a formal discussion below. Let ${\Delta}$ be the interval of $\mathcal{P}_{m+1}$ containing $\beta$.
  Note that by our assumption on $\beta$, is not possible to have ${\Delta}=I_{m+1}^{q_m}$. Therefore there are two cases for ${\Delta}$: either ${\Delta}\subseteq I_m \setminus (I_{m+2} \cup I_{m+1}^{q_m})$ or ${\Delta}=I_{m+2}$. 
 \begin{enumerate}
 \item [1)] Let ${\Delta}\subseteq I_m\setminus (I_{m+2} \cup I_{m+1}^{q_m})$. In this case, there exists $r>0$ which is beau commensurable with $|I_m|$ such that
   $$\Delta_1\equiv U_r(\Delta)\subset D_m\text{ and }\Delta_1\cap U_r(f^{q_m}(0))=\emptyset.$$
   Let us observe that by our assumptions $J \subseteq \Delta_1$. In the case when $\zeta \notin \Delta_1$,  we are in the same situation as in \cite[Lemma 4.8]{Yam2019}, and the argument there applies {\it verbatim}. Therefore, let us assume that $\zeta\in\Delta_1$. 

Note that $J'\subset [f^{-q_{m+1}(0)},f^{q_m-q_{m+1}}(0)]$ and the endpoint $f^{q_m-q_{m+1}}(0)$ is a critical value of the iterate $f^{q_{m+1}}$. This easily implies (cf. \cite[Figure 3]{Yam2019}) that there exists a beau $\veps>0$ such that if $\widehat{(z_{-j},J_{-j})}>\veps$ for all $j$ between $k$ and $k+q_{m+1}$, then $\zeta'\in D_m$.
As we are pulling back by the iterate $f^{q_{m+1}}$, our interval $H_m$ will go through a critical value of $f$ three times: twice through $f(0)$, for the pullbacks by $f^{q_m-1}$ and $f^{q_{m+1}-1}$, and once through $f(c_1)$ via pullback by $f^{\ell-1}$.

Suppose $\widehat{(\zeta,J)}<\veps$ (otherwise, we would be done by Lemma~\ref{lemmaineqonelevel}). 
By considerations of Koebe Distortion Theorem, there exist $\veps_2>0$ beau and also beau constants of commensurability such that one of the following possibilities holds:
\begin{itemize}
\item[a)] $\zeta'\in D_m$;
\item[b)] $\dist(z_{-k-(q_{m}-1)},J_{-k-(q_m-1)}) \asymp |f^{-(q_m-1)}(I_m)|$, \\ $\widehat{(z_{-k-(q_m-1)},J_{-k-(q_m-1)})}<\veps$ \, and $\widehat{(z_{-k-q_m},J_{-k-q_m})}>\veps_2$;
\item[c)] $\dist(z_{-k-(q_{m+1}-q_m-1)},J_{-k-(q_{m+1}-q_m-1)}) \asymp |f^{-(q_{m+1}-q_m-1)}(I_m)|$,\\ $\widehat{(z_{-k-(q_{m+1}-q_m-1)},J_{-k-(q_{m+1}-q_m-1)})}<\veps$ \, and \\
$\widehat{(z_{-k-(q_{m+1}-q_m)},J_{-k-(q_{m+1}-q_m)})}>\veps_2$;
\item[d)] $\dist(z_{-k-(\ell-1)},J_{-k-(\ell-1)}) \asymp |f^{-(\ell-1)}(I_m)|$, \\$\widehat{(z_{-k-(\ell-1)},J_{-k-(\ell-1)})}<\veps$ \, and $\widehat{(z_{-k-\ell},J_{-k-\ell})}>\veps_2$.
\end{itemize}
Since we have assumed that $J$ is far from the critical values $f^{q_m}(0)$,
$f^{q_{m+1}-q_m}(0)$, in case b) we have $|J_{-k-q_m}| \asymp |J'|$, and the statement follows from Lemma~\ref{lemmaineqonelevel}.
Case c) is handled in the same way. It remains to discuss case d). Now, since we have assumed that $\zeta$ is close to $\beta$, we will have
$$\frac{\dist(z_{-k-\ell},J_{-k-\ell})}{|J_{-k-\ell}|} \, < \, C_1 \, \sqrt[d_1]{\frac{\dist(z_{-k-(\ell-1)},J_{-k-(\ell-1)})}{|J_{-k-(\ell-1)}|}}+C_2,$$
for $C_1,C_2$ beau constants. The claim follows from Lemma~\ref{lemmaineqonelevel}, see Figure \ref{fig:cased}
\begin{figure}[!ht]
\centering
\psfrag{0}[][][1]{$0$}
\psfrag{Dm}[][][1]{$D_{m}$}
\psfrag{J-k-l-1}[][]{$J_{-k-(\ell-1)}$} 
\psfrag{B}[][][1]{$f^{q_{m+1}-\ell}$}
\psfrag{A}[][][1]{$f^{\ell-1}$} 
\psfrag{w}[][][1]{$c_1$}
\psfrag{ADm}[][][1]{$\tilde{D}_m$} 
\psfrag{j}[][][1]{$f(c_1)$} 
\psfrag{phicircs}[][][1]{$\phi \circ s$} 
\psfrag{tildeJ}[][][1]{$J'$}
\psfrag{hatJ}[][][1]{$J_{-k-\ell}$} 
\psfrag{hatDm}[][][1]{$\hat{D}_m$} 
\psfrag{delta1}[][][1]{$\Delta_1$}
\psfrag{tildeDm}[][][1]{$D'_m$} 
\psfrag{J}[][][1]{$J$}  
\includegraphics[width=4.6in]{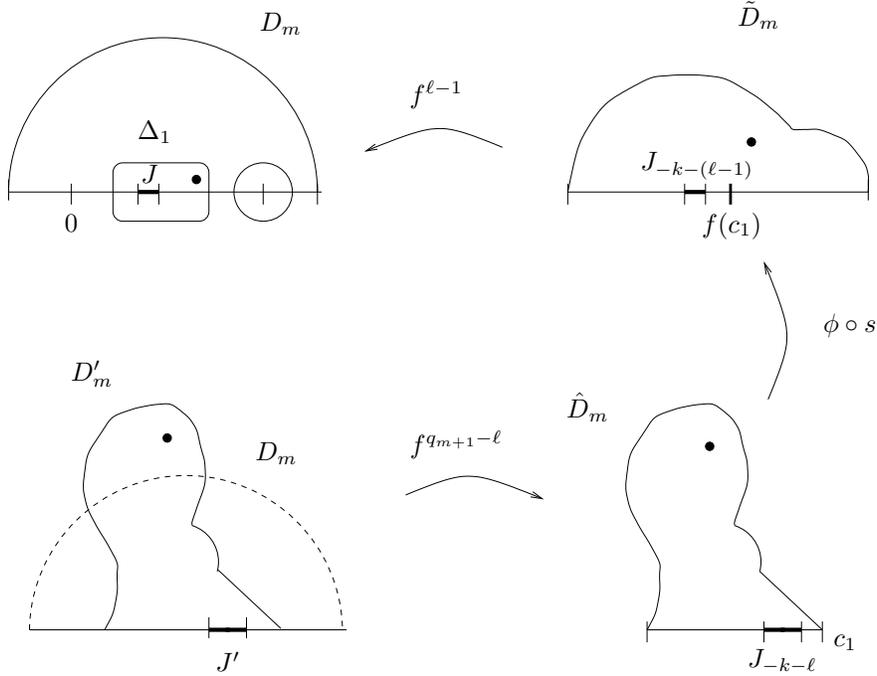}
  \caption{\label{fig:cased} Proof of Lemma \ref{lemmaineqseverallevels}, case d).}
  \end{figure}

  \item [2)] If ${\Delta}=I_{m+2}$ then for $J$ as in the statement we can have that $J$ is between $0$ and $\beta$, or $\beta \in J$, or $J$ is between $\beta$ and $f^{q_{m+2}}(0)$. In any case, we can repeat the strategy used in item 1) and obtain the result.
  \end{enumerate} 
 \end{proof}
 
 Next result will let us to obtain the step of induction that we will use in the proof of Main Lemma. We refer the reader to \cite[Lemma 4.9]{Yam2019}.

\begin{lemma}\label{lemmaineqinductionstep}
There exists $\varepsilon_3>0$ such that for each $n>m$ we have the following. Let $J$ be the last return of the backward orbit \ref{J-orbit} to $I_n$ before the first return to $I_{m+1}$. Let $J'$ and $J''$ be the first two returns of \ref{J-orbit} to $I_{m+1}$ and $\zeta,\zeta'$ be the corresponding moments in the backward orbit \ref{z-orbit}, in other words, $\zeta=f^{q_m}(\zeta')$ and $\zeta'=f^{q_{m+2}}(\zeta'')$. Suppose that $\zeta \in D_m$, then either $\zeta'' \in D_{m+1}$, or $\widehat{(\zeta'', I_{m+1})}>\varepsilon_3$ (and $\dist(\zeta'', J'')< C|I_{m+1}|$ where $C$ is beau).  
\end{lemma}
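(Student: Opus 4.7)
The goal is to traverse the inverse orbit from $\zeta$ to $\zeta''$ by iterated application of Lemma~\ref{lemmaineqseverallevels}, using the bounded type hypothesis to keep the number of applications uniformly bounded. The pull-back splits into two natural stages: first $\zeta = f^{q_m}(\zeta')$, then $\zeta' = f^{q_{m+2}}(\zeta'')$. By Lemma~\ref{lemmaJ-orbitcontained}, the first stage visits $I_m$ at exactly $a_{m+1}$ intermediate returns of the backward orbit, each consecutive pair related by a pull-back of the form $f^{q_{m+1}}$ that is precisely the setup of Lemma~\ref{lemmaineqseverallevels}. The second stage admits the analogous decomposition at level $m+1$, contributing another $a_{m+2}$ applications. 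The bounded type assumption gives $a_{m+1},a_{m+2}\le B$, so the walk has uniformly bounded length.

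At each step of the walk, Lemma~\ref{lemmaineqseverallevels} produces one of two alternatives: the current moment of the $z$-orbit remains in the relevant disk ($D_m$ in stage one, $D_{m+1}$ in stage two), in which case we continue; or a definite angular lower bound $\widehat{(z_{-s},J_{-s})}>\varepsilon_2$ is attained at some intermediate moment. In the first alternative, if we survive the entire first stage in $D_m$, Corollary~\ref{corollarybeau} provides the beau commensurability $|H_m|\asymp |H_{m+1}|$, so $\zeta'$ lies in a beau enlargement of $D_{m+1}$ and the second stage can be launched; if we in turn survive stage two in $D_{m+1}$, we arrive at $\zeta''\in D_{m+1}$, which is one of the desired conclusions. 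In the second alternative, once the angular bound is achieved the remaining pull-backs are diffeomorphic between adjacent fundamental intervals, so Lemma~\ref{lemmaineqonelevel} propagates a bound of the form $\dist(z_{-s'},J_{-s'})/|J_{-s'}|\le C\,\dist(z_{-s},J_{-s})/|J_{-s}|$ through to $\zeta''$. Translating via Corollary~\ref{corollarybeau} this yields $\dist(\zeta'',J'')<C|I_{m+1}|$ together with a definite angle $\widehat{(\zeta'',I_{m+1})}>\varepsilon_3$ for a beau $\varepsilon_3>0$.

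\textbf{Main obstacle.} The subtle bookkeeping concerns the transition between levels: when the first stage exits with $\zeta'\in D_m$ but we need to enter the second stage with a $D_{m+1}$-type hypothesis, the beau commensurability of adjacent fundamental intervals is precisely what prevents loss of control. Equally delicate is ensuring that the exceptional angular branch delivered by an early invocation of Lemma~\ref{lemmaineqseverallevels} in stage one survives the subsequent critical pull-back at the start of stage two; here one must reduce to one of the cases (b), (c), (d) in the proof of Lemma~\ref{lemmaineqseverallevels} to see that the angular condition, once installed, cannot be destroyed by a single passage through a critical value (it may only decrease up to a beau multiplicative constant through the $d_1$-th root factor). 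The bounded type hypothesis is crucial throughout: it bounds the number of applications of Lemmas~\ref{lemmaineqseverallevels} and~\ref{lemmaineqonelevel} by $2B$, so all accumulated constants remain beau and produce a universal $\varepsilon_3$.
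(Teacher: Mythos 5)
There is a genuine gap: you have mischaracterized the structure of the pull-back and, as a result, assigned the bounded-type hypothesis a role it does not play in this particular lemma. In the statement, $\zeta$ corresponds to $J$, the \emph{last} return of the backward orbit to the level-$m$ fundamental interval before the first return to $I_{m+1}$; the transition $\zeta\mapsto\zeta'$ is therefore a \emph{single} pull-back by $f^{q_m}$ (with no intermediate visits to $I_m$), and $\zeta'\mapsto\zeta''$ is a \emph{single} pull-back by $f^{q_{m+2}}$, taking the first return to $I_{m+1}$ to the second. The $a_{m+1}$ returns to $I_m$ described in Lemma~\ref{lemmaJ-orbitcontained} occur \emph{before} the orbit arrives at $\zeta$, and they are handled by the repeated invocations of Lemma~\ref{lemmaineqseverallevels} inside the proof of Lemma~\ref{mainlemma}, not inside the present lemma. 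Consequently there is no ``walk of length at most $2B$'' here, and the bounded-type hypothesis is not what keeps the constants beau in this step.

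What the lemma actually requires is an argument in the spirit of --- but not an iteration of --- the proof of Lemma~\ref{lemmaineqseverallevels}: decompose the composite pull-back carrying $D_m$ down to the moment $\zeta''$ into diffeomorphic blocks interleaved with the finitely many single iterates of $f$ passing through critical values, control the diffeomorphic blocks by Lemma~\ref{lemmadifeo} and Koebe distortion, and control each critical passage by the degree-$d_1$ root estimate, so as to produce an interval $I\subseteq H_{m+1}$ with $|I|\asymp|I_{m+1}|$ and a beau $\varepsilon_3$ for which the pulled-back disk is contained in $D_{m+1}\cup D_{\varepsilon_3}(I)$. The dichotomy ``$\zeta''\in D_{m+1}$ or $\widehat{(\zeta'',I_{m+1})}>\varepsilon_3$ with $\dist(\zeta'',J'')<C|I_{m+1}|$'' then follows immediately from this containment and \eqref{eq:diamD}. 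Your plan instead tries to re-enter Lemma~\ref{lemmaineqseverallevels} at level $m+1$ after observing that $\zeta'\in D_m$ lies in a beau enlargement of $D_{m+1}$; but that lemma's hypothesis is $\zeta'\in D_{m+1}$ exactly, not membership in a commensurable enlargement of it, so this reduction does not close. The level-changing containment $D''_m\subseteq D_{m+1}\cup D_{\varepsilon_3}(I)$ is precisely the missing ingredient, and it must be established directly.
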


\begin{proof}[Proof of Lemma \ref{lemmaineqinductionstep}] 
By our assumption in the proof of Lemma \ref{lemmaineqseverallevels}, if $\Delta$ is the interval belonging to $\mathcal{P}_{n+1}$ containing the point $\beta=f^{\ell}(c_1)$, then $\Delta\neq I_{m+1}^{q_m}$.  We have two cases; $\Delta=I_{m+2}$ or $\Delta\neq I_{m+2}$. In Figure \ref{fig:lemmainductionstep} we show the case $\Delta=I_{m+2}$ and $q_m<\ell$, the other cases are similar. Let $D^{''}_m=f^{-(q_m-q_{m+2})}(D_m)$. Using an analogous argument used in the proof of Lemma \ref{lemmaineqseverallevels}, that is, decomposing the iterate $f^{q_{m+2}-q_m}$ as compositions of diffeomorphisms and one iterate of $f$ we get the following: there exist $\varepsilon_3$ beau and $I \subseteq H_{m+1}$ with $|I|\asymp |I_{m+1}|$,  such that $D^{''}_m \subseteq D_{m+1} \cup D_{\varepsilon_3}(I)$, see Figure \ref{fig:lemmainductionstep} below.  
\begin{figure}[!ht]
\centering
\psfrag{Dm}[][][1]{$D_{m}$}
\psfrag{Dm+1}[][][1]{$D_{m+1}$}
\psfrag{A}[][]{$f^{q_m}$} 
\psfrag{B}[][][1]{$f^{\ell-q_m}$}
\psfrag{C}[][][1]{$f^{q_m+q_{m+1}-\ell}$}
\psfrag{L}[][][1]{$f^{q_{m+2}-q_{m+1}}$}
\psfrag{a}[][][1]{$f^{q_{m+1}}(0)$} 
\psfrag{e}[][][1]{$\ f^{q_m-q_{m+1}}(0)$}
\psfrag{v}[][][1]{$f^{q_{m+1}-q_{m+2}}(0)$}
\psfrag{p}[][][1]{$f^{-q_m}(\beta)$} 
\psfrag{beta}[][][1]{$\beta$}
\psfrag{Dm'}[][][1]{$D'_m$} 
\psfrag{j}[][][1]{$0$}
\psfrag{f}[][][1]{$f^{q_{m+1}-q_m}(0)$}
\psfrag{k}[][][1]{$c_{1}$}
\psfrag{w}[][][1]{$f^{-\ell+q_m}(0)$}
\psfrag{r}[][][1]{$0$}
\psfrag{s}[][][1]{$f^{-q_{m+1}}(0)$} 
\psfrag{Dm''}[][][1]{$D^{''}_m$} 
\psfrag{tildeDm}[][][1]{$\tilde{D}_m$} 
\psfrag{hatDm}[][][1]{$\widehat{D}_m$} 
\psfrag{Im+2}[][][1]{$I_{m+2}$}
\psfrag{Im+1qm}[][][1]{$I_{m+1}^{q_m}$}  
\includegraphics[width=4.8in]{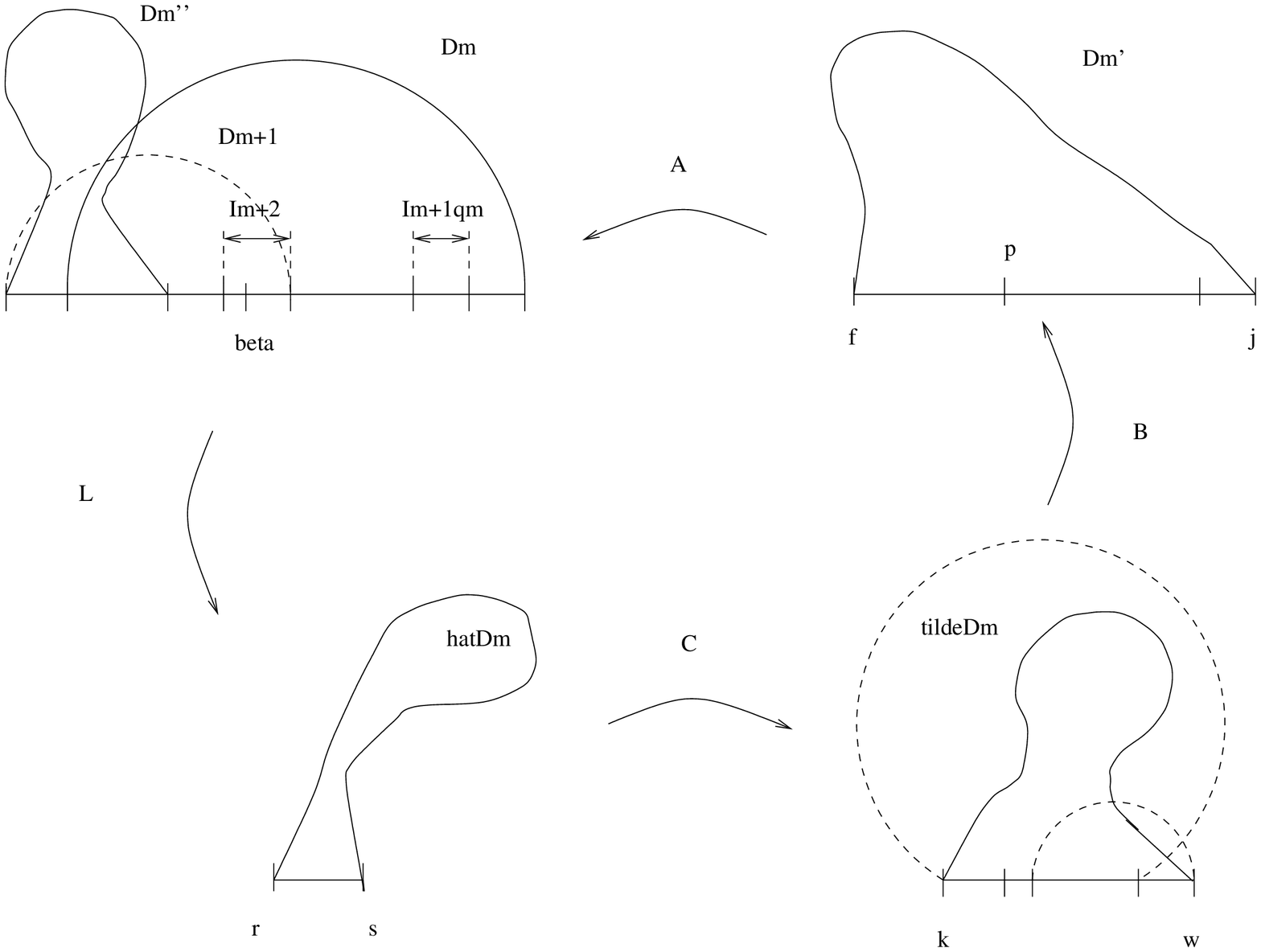}
  \caption{\label{fig:lemmainductionstep} Proof of Lemma \ref{lemmaineqinductionstep}.}
  \end{figure}
  
  Then either $\zeta'' \in D_{m+1}$ or $\widehat{(\zeta'', I_{m+1})}>\varepsilon_3$. In the last case, by equation \eqref{eq:diamD} we have
\[
 \dist(\zeta'', J') \leq \diam(D_{\varepsilon_3}(I)) = C\,|I|\asymp |I_{m+1}|,
\]
for $C=C(\varepsilon_3)>0$ and therefore beau.
 \end{proof}

 Now, with Lemma \ref{lemmaineqseverallevels} and Lemma \ref{lemmaineqinductionstep} at hand we proceed to prove our Main Lemma. 
 
\begin{proof}[Proof of Lemma \ref{mainlemma}]
Let $z \in D_M$. Let $m$ be the largest number such that $z \in D_m$. We always will have two cases, and in each case we will get the inequality \eqref{ineqmainlemma} for each $n \geq m$. Let $P_0, \dots, P_{-k}$ be the consecutive returns of the backward orbit \ref{J-orbit} of $I_n$ to $I_m$ before the first return to $I_{m+1}$ and denote by $z=\zeta_0, \dots \zeta_{-k}=\zeta'$ the corresponding points of the orbit \ref{z-orbit}. By Lemma \ref{lemmaineqseverallevels} there exist beau constants $C>0$ and $\varepsilon_2>0$ such that 
$\widehat{(\zeta', J')}>\varepsilon_2$ and $\dist(\zeta',P_{-k})<C\, |I_m|$, or $\zeta' \in D_m$. In the first case, by Lemma \ref{lemmaineqonelevel} we obtain inequality \eqref{ineqmainlemma}. In the second case, we consider the point $\zeta''$ corresponding to the second return of the orbit \ref{z-orbit} to $I_{m+1}$. By Lemma \ref{lemmaineqinductionstep}, there exist beau constants $\varepsilon_3>0$ and $C>0$ such that either $\widehat{(\zeta'',I_{m+1})}>\varepsilon_3$ and $\dist(\zeta'',I_{m+1})<C\, |I_{m+1}|$, or $\zeta'' \in D_{m+1}$. In the first case, we obtain the inequality \eqref{ineqmainlemma} by Lemma \ref{lemmaineqonelevel}. In the second case, we repeat the previous argument this time for $m+1$ instead $m$.
\end{proof}

\section{Applications to bi-cubic maps}
Let us now specialize to the case when a multicritical circle map $f$ has exactly two critical points, both of which are of criticality $3$. We will call such maps {\it bi-cubic}, and place one of these points at $0$ to fix the ideas; we will denote the other critical point by $c$. The renormalizations of such map will then be defined with respect to the critical point at $0$.

The following is a generalization of \cite[Theorem 2.8]{Yam2019}, the proof applies  {\it verbatim}, and will be omitted:
\begin{theorem}
  \label{th:renconv}
  For each $K\in\NN$ there exists $\lambda\in(0,1)$ such that the following holds.
  Suppose $f$ and $g$ are two bi-cubic critical circle maps with the same signature, and assume that $\rho(f)=\rho(g)$ is of a type bounded by $K$. Then
    $$\dist(\cR^jf,\cR^jg)=o(\lambda^j)$$
  in the uniform norm on a neighborhood of their intervals of definition.
\end{theorem}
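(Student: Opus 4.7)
The plan is to deduce exponential contraction from two ingredients: compactness of the renormalization orbit (now supplied by Theorem~\ref{thm:bounds}) together with uniform hyperbolicity of the renormalization operator on the invariant compact set corresponding to bounded-type combinatorics. Since the proof of the quadratic-irrational version in \cite{Yam2019}, Theorem 2.8, is structured precisely this way, the task is to verify that each step transfers once complex a priori bounds are available throughout the bounded-type regime.

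First, apply Theorem~\ref{thm:bounds} with $L=3$ and $B=K$: there exist $\mu=\mu(K)>0$ and $N\in\nt$ such that for all $n\ge N$, both $\cR^n f$ and $\cR^n g$ extend, after affine rescaling, to holomorphic commuting pairs in $\hol^3(\mu)$, which is sequentially compact by Lemma~\ref{bounds compactness}. Let $\cI_K$ denote the closure inside $\hol^3(\mu)$ of the set of all such extensions, taken over bi-cubic circle maps whose rotation number has type bounded by $K$; then $\cI_K$ is compact and forward-invariant under $\cR$. Next I would invoke uniform hyperbolicity of $\cR$ on $\cI_K$: the operator admits a splitting into a one-parameter unstable direction, governed by the rotation number, and a stable direction along which pairs sharing the same asymptotic combinatorics contract exponentially. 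The signature hypothesis guarantees identical combinatorial labels for $f$ and $g$ at every renormalization level --- not only the rotation number, but also, via the values $\delta_i=\mu_f[c_i,c_{i+1})$, the assignment of critical points to arcs of each dynamical partition --- so that $\cR^n f$ and $\cR^n g$ lie on a common stable leaf for all sufficiently large $n$. Uniform contraction along stable leaves then yields
$$\dist(\cR^n f,\cR^n g)\le C\,\lambda^n,$$
for some $\lambda=\lambda(K)\in(0,1)$.

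The hard part will be establishing the hyperbolicity of $\cR$ on $\cI_K$ together with the identification of its stable leaves. In the quadratic case, this is achieved via a Schwarz Lemma / Teichm\"uller contraction argument whose only non-formal inputs are (i) compactness of the renormalization orbit in a space of holomorphic commuting pairs with uniform moduli, and (ii) real-analytic dependence of $\cR$ on the underlying pair. Item (i) was the sole obstruction to extending \cite{Yam2019} beyond quadratic rotation numbers, and it is precisely what Theorem~\ref{thm:bounds} now supplies; item (ii) is a formal consequence of $\cR$ being built from branched covers, and is unchanged. Because both inputs now hold in the full bounded-type setting, the argument transfers verbatim, and the resulting rate $\lambda$ depends only on $K$ (through the compact set $\cI_K$) and not on the individual pair $(f,g)$.
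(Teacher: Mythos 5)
The paper does not actually give a proof of Theorem~\ref{th:renconv}: it states that the result is a generalization of \cite[Theorem 2.8]{Yam2019}, that the proof of that theorem applies \emph{verbatim}, and omits it. You have therefore reconstructed what that proof looks like rather than compared against one, and your reconstruction gets the essential point right: the only input that needed upgrading in passing from quadratic irrationals to bounded type is the complex a priori bounds, which Theorem~\ref{thm:bounds} together with Lemma~\ref{bounds compactness} now supply for the whole bounded-type class, producing a compact $\cR$-invariant set of holomorphic commuting pairs with uniform moduli.

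Where your write-up overreaches is in attributing the conclusion to ``uniform hyperbolicity of $\cR$ on $\cI_K$'' with a stable/unstable splitting and contraction along stable leaves. Hyperbolicity of the renormalization operator is a substantially stronger statement than Theorem~\ref{th:renconv}: it also requires producing an expanding direction, and for multicritical (in particular bi-cubic) commuting pairs this has not been established, nor is it claimed in \cite{Yam2019}. What Yampolsky's Theorem~2.8 actually proves, and all that is needed here, is one-sided exponential contraction of pairs sharing the same combinatorics; the mechanism is a McMullen/Lyubich-style Schwarz Lemma on a suitable Teichm\"uller or Carath\'eodory metric, where strict contraction plus compactness of the orbit closure yields a uniform contraction factor $\lambda<1$. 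You do identify this mechanism correctly in your final paragraph, so the intended argument is sound; but the passage invoking hyperbolicity and stable leaves should be deleted or rephrased as a contraction statement, since stated literally it appeals to a result that is not available. With that correction, and with the observation (which you make) that the uniform $\lambda$ comes from compactness of $\cI_K$ while the choice of stable class is fixed by the shared signature, the proposal matches the route the paper intends.
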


\noindent
The following result is Main Theorem in \cite{EG}

\begin{theorem}\label{rigidityEG}
There exists a full Lebesgue measure set $\mathcal{A}\subset(0,1)$ of irrational numbers (which includes the set of bounded type numbers) with the following property. Let $f$ and $g$ be $C^3$ multicritical circle maps with the same signature and such that its common rotation number belongs to the set $\mathcal{A}$. If the renormalizations of $f$ and $g$ around corresponding critical points converge together exponentially fast in the $C^1$ topology, then $f$ and $g$ are conjugate to each other by a $C^{1+\alpha}$ diffeomorphism.  
\end{theorem}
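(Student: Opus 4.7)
The plan is to promote the combinatorial conjugacy between $f$ and $g$ into a $C^{1+\alpha}$ diffeomorphism by reading off the derivative from the geometry of the dynamical partitions and controlling its Hölder modulus by the rate of renormalization convergence. Because $f$ and $g$ share signature and rotation number, sending each dynamical atom $f^i(I_n(c_j))$ of $f$ to the corresponding atom of $g$ extends uniquely to a homeomorphism $h\colon\TT\to\TT$ with $h\circ f=g\circ h$ that carries critical points to critical points. The real bounds of Corollary~\ref{corollarybeau} make $h$ automatically quasisymmetric, so the real task is to upgrade its regularity from quasisymmetric to $C^{1+\alpha}$.

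For each $x\in\TT$, I would define $h'(x)$ as the limit, as $n\to\infty$, of the ratio $|h(I_n(x))|/|I_n(x)|$, where $I_n(x)$ is the atom of the $n$-th dynamical partition containing $x$. To bound the rate of convergence, Theorem~\ref{teobeau} provides a factorization of $f^{q_{n+1}}$ restricted to a fundamental atom as a composition of at most $N+1$ power maps $z\mapsto z^{d_j}$ intertwined with interval diffeomorphisms of beau distortion; since the signatures agree, $g$ admits the analogous factorization with identical power factors on corresponding pieces, and the diffeomorphic factors of $f$ and $g$ are compared through the hypothesis that $\cR^n f$ and $\cR^n g$ are $O(\lambda^n)$-close in $C^1$. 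It follows that the ratios at consecutive levels differ by $O(\lambda^n)$, producing a geometric Cauchy sequence whose limit defines $h'(x)$. Since atoms at level $n$ have diameter $\lesssim\sigma^n$ for some $\sigma\in(0,1)$ supplied by the real bounds, the exponential control of the ratios translates into a pointwise Hölder expansion $|h(y)-h(x)-h'(x)(y-x)|=O(|y-x|^{1+\alpha})$ for some $\alpha\in(0,1)$ determined by $\lambda$ and $\sigma$.

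The main obstacle is at the critical points, where ordinary linearization fails because $f$ and $g$ are not diffeomorphisms there. I would handle this by pre- and post-composing with the local power maps $z\mapsto z^{d_j}$: because the criticalities are part of the signature and hence identical for $f$ and $g$, the power factors cancel exactly and the remaining diffeomorphic pieces have beau distortion, so the argument of the previous paragraph applies in the conjugated coordinate. Promoting the pointwise $C^{1+\alpha}$ estimate at every $x$ to a uniform Hölder modulus for $h'$ then uses the real bounds one last time: any two points of $\TT$ lie in commensurable atoms of a common level, so their derivatives can be compared through a bounded chain of beau-distortion maps. The Diophantine condition defining $\mathcal A$ enters only through the requirement that the combinatorial depth of an atom of scale $\delta$ be $O(\log(1/\delta))$, which in the bounded-type case relevant to the present paper is immediate from Corollary~\ref{corollarybeau}.
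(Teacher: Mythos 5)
The paper does not prove Theorem~\ref{rigidityEG}; it is quoted verbatim as the ``Main Theorem in \cite{EG}'', a reference listed as work in progress. There is therefore no internal proof to compare against, and your sketch should be measured against the de~Faria--de~Melo $C^{1+\alpha}$ rigidity machinery (\cite{dFdM1}, \cite{dFdM2}), of which \cite{EG} is the multicritical extension.

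Your skeleton --- build $h$ by matching partition atoms, observe quasisymmetry from the real bounds, define $h'(x)$ as a limit of length-ratios $|h(I_n(x))|/|I_n(x)|$, and upgrade to $C^{1+\alpha}$ by showing those ratios form a geometric Cauchy sequence --- is indeed the correct outline, and it is the outline of \cite{dFdM2}. But the crux is asserted rather than argued. The hypothesis ``$\cR^n f$ and $\cR^n g$ are $O(\lambda^n)$-close in $C^1$'' does not directly yield ``the ratios at consecutive levels differ by $O(\lambda^n)$''. The atom of $\mathcal P_n(c_0)$ containing a generic $x$ is $f^i(I_n(c_0))$ for some $0\le i<q_{n+1}$, and its length is produced by iterating the diffeomorphic pieces of the renormalization up to $a_{n+1}$ times, composing across several renormalization levels as $i$ ranges over the orbit. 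Controlling how an $O(\lambda^n)$ $C^1$-discrepancy propagates through these compositions --- through the power-map factors, whose derivative vanishes at the critical points, and through diffeomorphisms whose nonlinearity must be summed rather than merely bounded --- is exactly what the cross-ratio/``balanced Koebe'' estimates of \cite{dFdM2}, and the decomposition argument behind Theorem~\ref{teobeau}, are engineered to deliver. That is the theorem; it is not an input. Similarly, the claim that ``the power factors cancel exactly'' near critical points presupposes that you already know the corresponding critical points of $f$ and $g$ sit at comparable positions inside corresponding atoms to precision $O(\lambda^n)$, which is again a consequence, not a hypothesis.

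Two smaller points: in the multicritical setting the renormalizations at all the critical points $c_0,\ldots,c_{N-1}$ must be tracked simultaneously and the resulting local smoothness estimates glued; you mention this only in passing, but the coherence step is nontrivial and is one place where \cite{EG} departs from the single-critical-point argument. And the set $\mathcal A$ of full Lebesgue measure is genuinely larger than bounded type, so the Diophantine input is subtler than ``combinatorial depth $O(\log(1/\delta))$''; for the bounded-type application in this paper that subtlety is harmless, but the theorem as stated requires more.
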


%
Therefore, Theorem~\ref{th:renconv} and 
Theorem \ref{rigidityEG} imply the following result,
\begin{theorem}
  \label{th:rigidity} Let $K\in\NN$. There exists $\alpha>0$ such that the following holds. 
  Suppose $f$ and $g$ are two bi-cubic circle maps whose signatures are the same, and furthermore, $\rho(f)=\rho(g)$ is of a type bounded by $K$. Then
  $f$ and $g$ are $C^{1+\alpha}$ conjugate on $\TT$.
  \end{theorem}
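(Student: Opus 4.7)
The plan is to deduce Theorem~\ref{th:rigidity} directly by chaining the two preceding results, with the only nontrivial bookkeeping being the passage from uniform convergence on a neighborhood to $C^1$-convergence on the real intervals of definition. First I would check that the hypotheses on the rotation number match: bounded type by $K$ is explicitly included in the full-measure set $\mathcal{A}$ from Theorem~\ref{rigidityEG}, and it is also precisely the hypothesis of Theorem~\ref{th:renconv}, so the quantifier on $K$ can be fixed throughout and matches in both invocations.

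Next I would invoke Theorem~\ref{th:renconv}: since $f$ and $g$ are bi-cubic with the same signature and with common rotation number of type bounded by $K$, we obtain $\lambda=\lambda(K)\in(0,1)$ such that
\begin{equation*}
\dist(\mathcal{R}^j f,\mathcal{R}^j g) = o(\lambda^j)
\end{equation*}
in the uniform norm on a fixed complex neighborhood $U_j$ of the interval of definition of $\mathcal{R}^jf$. The key point is that this neighborhood is genuinely open in $\mathbb{C}$: once the renormalizations have entered $\mathfrak{H}(\mu)$ (which is ensured by the complex bounds of Theorem~\ref{thm:bounds}, which underpin Theorem~\ref{th:renconv}), the domains of the analytic extensions contain a neighborhood of the real intervals of definite modulus. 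In particular, for each $j$ large enough the difference $\mathcal{R}^j f - \mathcal{R}^j g$ is holomorphic on a neighborhood of fixed shape around a normalized interval of length $\asymp 1$, and its sup-norm there is $o(\lambda^j)$.

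Then I would upgrade exponential $C^0$-closeness to $C^1$-closeness via the Cauchy integral formula. Since $(\mathcal{R}^j f - \mathcal{R}^j g)$ is holomorphic and of size $o(\lambda^j)$ on a complex neighborhood of definite size around the real interval, its derivative on that interval is bounded by a universal constant times $o(\lambda^j)$. Hence
\begin{equation*}
\|\mathcal{R}^j f - \mathcal{R}^j g\|_{C^1} = o(\lambda^j),
\end{equation*}
which gives exponential convergence of the renormalizations in the $C^1$ topology.

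Finally, with $C^1$ exponential convergence of renormalizations around corresponding critical points in hand, and since $\rho(f)=\rho(g)\in\mathcal{A}$ and the signatures agree, the hypotheses of Theorem~\ref{rigidityEG} are met, and we conclude that $f$ and $g$ are conjugate by a $C^{1+\alpha}$ diffeomorphism of $\mathbb{T}$ for some $\alpha=\alpha(K)>0$. The main obstacle, if any, is just ensuring that the ``neighborhood'' of Theorem~\ref{th:renconv} is uniform enough (independent of $j$ after normalization) to make Cauchy's estimate yield a $C^1$-bound with the same exponential decay rate; this follows from the complex a priori bounds already established in Theorem~\ref{thm:bounds}.
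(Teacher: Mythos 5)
Your proposal is correct and follows essentially the same route as the paper, which simply notes that Theorem~\ref{th:renconv} and Theorem~\ref{rigidityEG} together imply the result. You are more explicit than the paper about the passage from exponential $C^0$-closeness on a complex neighborhood (furnished by the complex bounds) to exponential $C^1$-closeness on the real interval via Cauchy estimates, a step the paper leaves implicit.
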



\end{document}